\newcommand{\cP}{\mathcal{P}}
\newcommand{\cM}{\mathcal{M}}
\newcommand{\cQ}{\mathcal{Q}}
\newcommand{\cR}{\mathcal{R}}
\newcommand{\cL}{\mathcal{L}}
\newcommand{\cN}{\mathcal{N}}
\newcommand{\cA}{\mathcal{A}}
\newcommand{\cC}{\mathcal{C}}
\newcommand{\cB}{\mathcal{B}}
\newcommand{\cK}{\mathcal{K}}
\newcommand{\cH}{\mathcal{H}}
\theoremstyle{definition}  \newtheorem{Def}{Definition}
\theoremstyle{plain}  \newtheorem{Thm}{Theorem} \theoremstyle{plain}
\newtheorem{cor}{Corollary}
 \theoremstyle{remark} \newtheorem{Rek}{Remark}
 \theoremstyle{plain}\newtheorem{Lem}{Lemma}
 \theoremstyle{plain}
\newtheorem{Prop}{Proposition}
\theoremstyle{remark}
\begin{document}

\title{\textbf{Higher Dimensional Homology Algebra III:Projective Resolutions and Derived 2-Functors in (2-SGp)}}
\author{Fang HUANG, Shao-Han CHEN, Wei CHEN, Zhu-Jun ZHENG\thanks{Supported in part by NSFC with grant Number
10971071
 and Provincial Foundation of Innovative
Scholars of Henan.} }
\date{}
 \maketitle

\begin{center}
\begin{minipage}{5in}
{\bf  Abstract}: In this paper, we will define the derived 2-functor
by projective resolution of any symmetric 2-group, and give some
related properties of the derived 2-functor.

{\bf{Keywords}:} Symmetric 2-Group; Projective Resolution; Derived
2-Functor
\\
\end{minipage}
\end{center}
\maketitle \hspace{1cm}

\section{Introduction}

In recent years, higher dimensional category theory has been largely
developed from a series of analogies with the potential
applications. For instance, in the representation theory, the
representation spaces not only to be vector spaces, but also to be
categories (or even higher categories)(\cite{30}), such as the
representation of categorical group, algebraic group(\cite{22,30}),
using category representations to describe the topological quantum
field theory(\cite{23}) and so on. In algebraic geometry, J. Lurie
gives a very tractable model of $(\infty,1)$-categories
(\cite{8,9,25}), and also A. Joyal's important work \cite{26}
showing that one can do category theory in quasi-categories is an
essential precursor to Lurie¡¯s work and is unquestionably one of
the most important recent developments in higher category theory.
Lie 2-algebra gives a solution of the Zamolodchikov tetrahedron
equation \cite{31} and Lie 2-group admits self-dual solutions in
five-dimensional space time in higher Yang-Mills theory and 2-form
electromagnetism\cite{32}. L.Breen's paper\cite{21} gives an idea of
how naturally 2-categorical algebra arises in the study of algebraic
geometry and differential geometry, such as Lie algebroids\cite{29},
integration of Lie 2-algebra\cite{28}, which are interesting
researching subjects. Higher dimensional category theory also has
been applied in algebraic topology theory\cite{9}, computer science,
logic etc..


In \cite{11}, A.del R\'{\i}o, J. Mart\'{\i}nez-Moreno and E. M.
Vitale gave the definition of cohomology categorical groups for any
complex in the 2-category (2-SGp)(which is an abelian
2-category\cite{2}) of symmetric categorical groups(we call them
symmetric 2-groups) after discussing the relative kernel and
relative cokernel, and constructed a long 2-exact sequence from an
extension of complexes in (2-SGp). These drive us to write a series
of papers to develop a homological algebra for 2-categories (2-SGp)
and ($\cR$-2-Mod)(\cite{4}).

This is the third paper of the series. In our first paper\cite{4} of
this series, we gave the definition of $\cR$-2-module. In the second
paper\cite{14}, we proved that the 2-categories (2-SGp) and
($\cR$-2-Mod) are projective enough. In this paper, we shall give
the definition of left derived 2-functor for the 2-category (2-SGp)
and give a fundamental property of derived 2-functor. When we
finished this paper, we found Prof. T.Pirashvili also discussed some
problems about higher homological theory\cite{20,24}.

For a symmetric 2-group, we construct a projective resolution in the
2-category (2-SGp) and prove that it is unique up to 2-chain
homotopy (Proposition 2 and Theorem 1). These results are the main
stones of this paper and make it possible to define left derived
2-functor in (2-SGp).

In 1-dimensional case, derived functor has many applications in many
fields of mathematics, such as ring theory, algebraic topology,
representation theory, algebraic geometry
etc.\cite{7,10,12,16,17,18}. We believe that derived 2-functor
should have many applications in higher dimensional category theory.



The present paper is organized as follows. In section 2, we recall
some definitions in (2-SGp) such as the relative (co)kernel,
relative 2-exact which are appeared in \cite{2,11,6}. By the similar
method in \cite{11}, we give the definitions of homology symmetric
2-groups for a complex of symmetric 2-groups and describe them
explicitly, show the induced morphisms of homology symmetric
2-groups more clearly. We also give the definition of 2-chain
homotopy of two morphisms of complexes in (2-SGp) like chain
homotopy in 1-dimensional case, and prove that it induces an
equivalent morphisms between homology symmetric 2-groups. In section
3, we mainly give the definition of projective resolution of a
symmetric 2-group and its construction(Proposition 2). In the last
section, we define the left derived 2-functor and obtain our main
result Theorem 2.

\section{Preliminary}
In this section, we review the constructions of the relative
(co)kernel and the definition of relative 2-exactness of a
sequence\cite{2,11}, and then give the homology symmetric 2-groups
of a complex of symmetric 2-groups similar to the cohomology 2-group
given in \cite{11}.
The relative kernel\cite{11}
$(Ker(F,\varphi),e_{(F,\varphi)},\varepsilon_{(F,\varphi)})$ of a
sequence $(F,\varphi,G):\cA\rightarrow\cB\rightarrow\cC$ in (2-SGp)
is a symmetric 2-group consisting of:

$\cdot$ An object is a pair $(A\in obj(\cA),a:F(A)\rightarrow 0)$
such that the following diagram commutes
\begin{center}
\scalebox{0.9}[0.85]{\includegraphics{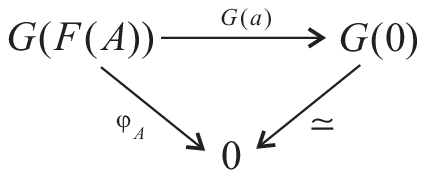}}
\end{center}
$\cdot$ A morphism $f:(A,a)\rightarrow (A^{'},a^{'})$ is a morphism
$f:A\rightarrow A^{'}$ in $\cA$ such that the following diagram
commutes
\begin{center}
\scalebox{0.9}[0.85]{\includegraphics{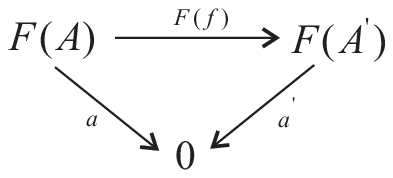}}
\end{center}
$\cdot$ The faithful functor
$e_{(F,\varphi)}:Ker(F,\varphi)\rightarrow\cA$ is defined by
$e_{(F,\varphi)}(A,a)=A$, and the natural transformation
$\varepsilon_{(F,\varphi)}:F\circ e_{(F,\varphi)}\Rightarrow 0$ by
$(\varepsilon_{(F,\varphi)})_{(A,a)}=a$.

The relative cokernel\cite{11}
$(Coker(\varphi,G),p_{(\varphi,G)},\pi_{(\varphi,G)})$ of a sequence
$(F,\varphi,G):\cA\rightarrow\cB\rightarrow\cC$ in (2-SGp) is a
symmetric 2-group consisting of:

$\cdot$  Objects are those of $\cC$.

$\cdot$ A morphism from $X$ to $Y$ is an equivalent class of pair
$(B,f):X\rightarrow Y$ with $B\in obj(\cB)$ and $f:X\rightarrow
G(B)+Y$. Two morphisms $(B,f),(B^{'},f^{'}):X\rightarrow Y$ are
equivalent if there is $A\in obj(\cA)$ and $a:B\rightarrow
F(A)+B^{'}$ such that the following diagram commutes
\begin{center}
\scalebox{0.9}[0.85]{\includegraphics{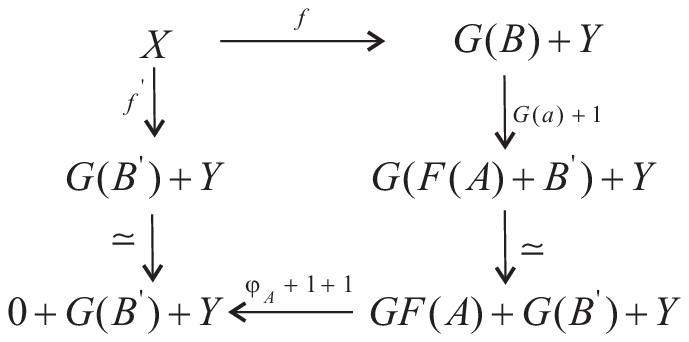}}
\end{center}
$\cdot$ The essentially surjective functor
$p_{(\varphi,G)}:\cC\rightarrow Coker(\varphi,G)$ is defined by
$p_{(\varphi,G)}(X)=X$, and the natural transformation
$\pi_{(\varphi,G)}: p_{(\varphi,G)}\circ G\Rightarrow 0$ by
$(\pi_{(\varphi,G)})_{B}=1_{G(B)}$.

The universal properties of relative kernel and cokernel just like
the usual ones, more details see \cite{11}.


\begin{Def}(\cite{11})
Consider the following diagram in (2-SGp)
\begin{center}
\scalebox{0.9}[0.85]{\includegraphics{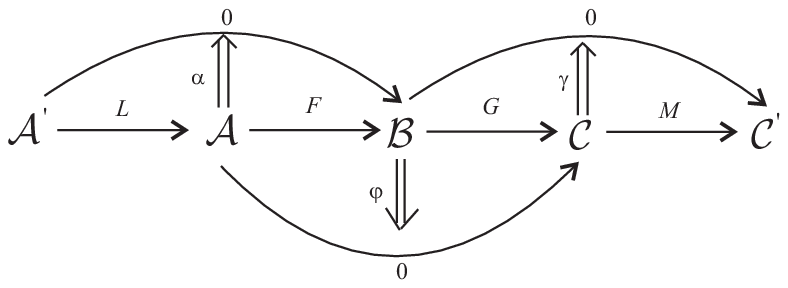}}
\end{center}
with $\alpha$ compatible with $\varphi$ and $\varphi$ compatible
with $\gamma$. By the universal property of the relative kernel
$Ker(G,\gamma)$, we get a factorization $(F^{'},\varphi^{'})$ of
$(F,\varphi)$ through $(e_{(F,\varphi)},\varepsilon_{(F,\varphi)})$.
By the cancellation property of $e_{(F,\varphi)}$, we have a
2-morphism $\overline{\alpha}$ as in the following diagram
\begin{center}
\scalebox{0.9}[0.85]{\includegraphics{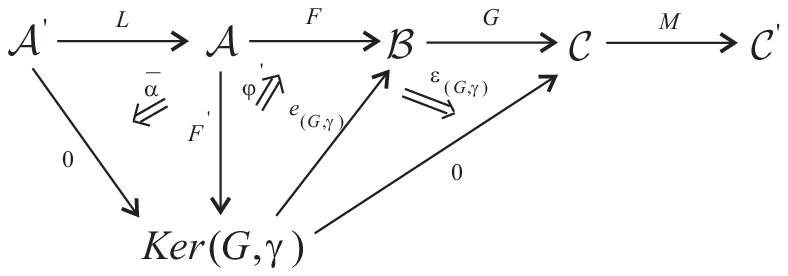}}
\end{center}
We say that the sequence $(L,\alpha,F,\varphi,G,\gamma,M)$ is
relative 2-exact in $\cB$ if the functor $F^{'}$ is essentially
surjective and $\overline{\alpha}$-full.
\end{Def}
\begin{Rek}
The equivalent definition of relative 2-exact is also given in
\cite{11}.
\end{Rek}

In the following, we will omit the composition symbol $\circ$ in our
diagrams.

From \cite{3,13}, a complex of symmetric 2-groups is a diagram in
(2-SGp) of the form
$$
\cA_{\cdot}=\cdot\cdot\cdot\xrightarrow[]{L_{n+1}}\cA_{n}\xrightarrow[]
{L_{n}}\cA_{n-1}\xrightarrow[]{L_{n-1}}\cA_{n-2}\xrightarrow[]{L_{n-2}}\cdot\cdot\cdot
\xrightarrow[]{L_{2}}\cA_{1}\xrightarrow[]{L_{1}}\cA_{0}
$$
together with a family of 2-morphisms $\{\alpha_{n}:L_{n-1}\circ
L_{n}\Rightarrow 0\}_{n\geq2}$ such that, for all $n$, the following
diagram commutes
\begin{center}
\scalebox{0.9}[0.85]{\includegraphics{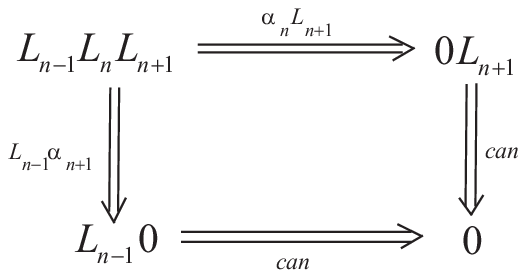}}
\end{center}
We call it 2-chain complex in our papers.

Consider part of the complex
\begin{center}
\scalebox{0.9}[0.85]{\includegraphics{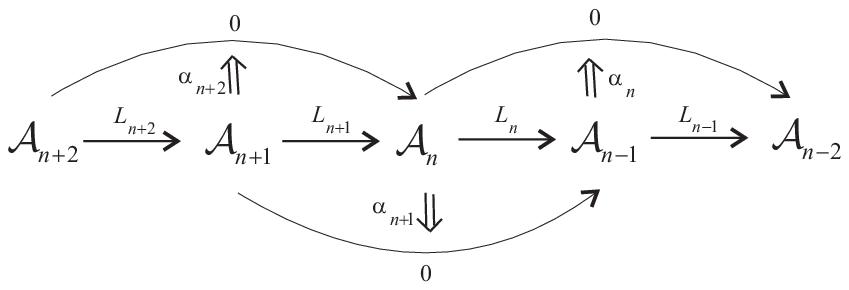}}
\end{center}
Based on the properties of relative kernel $Ker(L_n,\alpha_n)$, we
have the following diagram
\begin{center}
\scalebox{0.9}[0.85]{\includegraphics{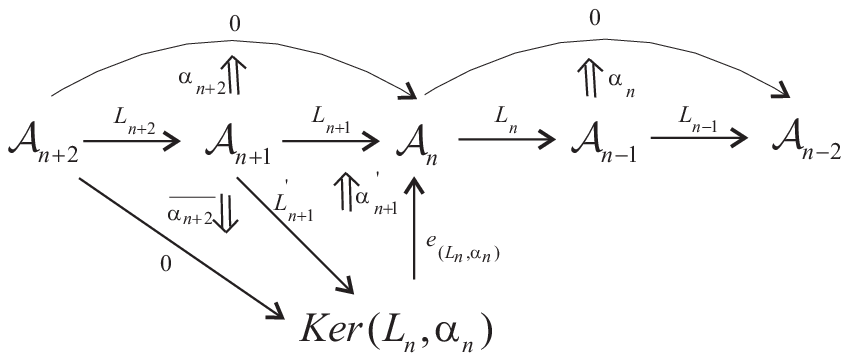}}
\end{center}
Similar to the definition of cohomology 2-group in \cite{11}, the
$n$th homology symmetric 2-group $\cH_{n}(\cA_{\cdot})$ of the
complex $\cA_{\cdot}$ is defined as the relative cokernel
$Coker(\overline{\alpha_{n+2}},L_{n+1}^{'})$.

Note that, to get $\cH_{0}(\cA_{\cdot})$ and $\cH_{1}(\cA_{\cdot})$,
we have to complete the complex $\cA_{\cdot}$ on the right with the
two zero-morphisms and two canonical 2-morphisms\\
$
\cdot\cdot\cdot\xrightarrow{L_2}\cA_1\xrightarrow[]{L_1}\cA_{0}\xrightarrow[]{0}0\xrightarrow[]{0}0,
$ can : $0\circ L_1\Rightarrow 0,$ can : $0\circ 0\Rightarrow 0.$

We give an explicit description of $\cH_{n}(\cA_{\cdot})$ following
from the cohomology symmetric 2-group given in \cite{11}.

$\cdot$ an object of $\cH_{n}(\cA_{\cdot})$ is an object of the
relative kernel $Ker(L_{n},\alpha_n)$, that is a pair
$$
(A_n\in obj(\cA_n), a_n:L_n(A_n)\rightarrow 0)
$$
such that $L_{n-1}(a_n)=(\alpha_{n})_{A_n}$;

$\cdot$ a morphism $(A_n,a_n)\rightarrow (A_n^{'},a_n^{'})$ is an
equivalent pair
$$
(X_{n+1}\in obj(\cA_{n+1}),x_{n+1}:A_n\rightarrow
L_{n+1}(X_{n+1})+A_n^{'})
$$
such that the following diagram commutes
\begin{center}
\scalebox{0.9}[0.85]{\includegraphics{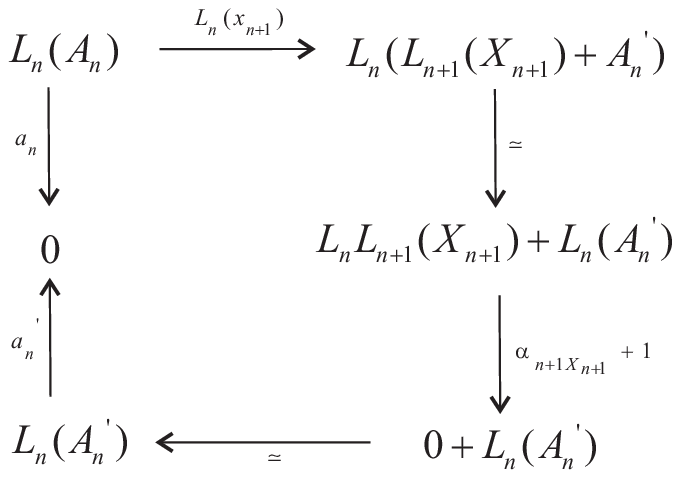}}
\end{center}
Two morphisms
$(X_{n+1},x_{n+1}),(X_{n+1}^{'},x_{n+1}^{'}):(A_n,a_n)\rightarrow
(A_n^{'},a_n^{'})$ are equivalent if there is a pair
$$
(X_{n+2}\in obj(\cA_{n+2}),x_{n+2}:X_{n+1}\rightarrow
L_{n+2}(X_{n+2})+X_{n+1}^{'})
$$
such that the following diagram commutes
\begin{center}
\scalebox{0.9}[0.85]{\includegraphics{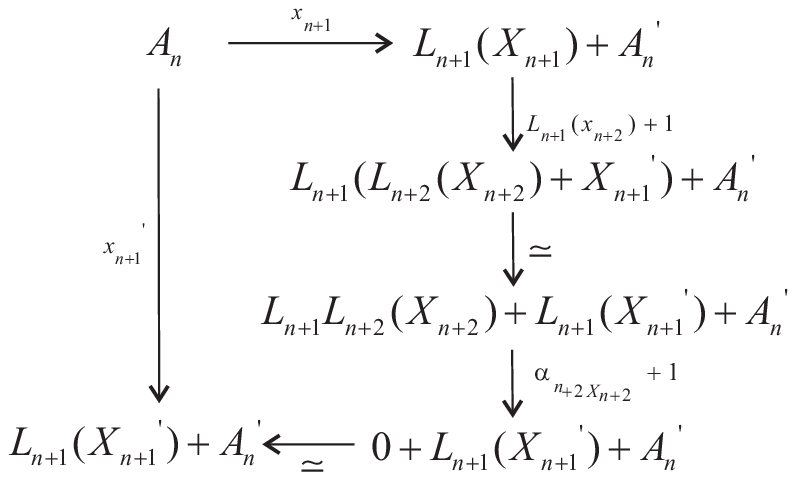}}
\end{center}

Similar to \cite{11}, a morphism
$(F_{\cdot},\lambda_{\cdot}):\cA_{\cdot}\rightarrow\cB_{\cdot}$ of
complexes in (2-SGp) is a picture in the following diagram
\begin{center}
\scalebox{0.9}[0.85]{\includegraphics{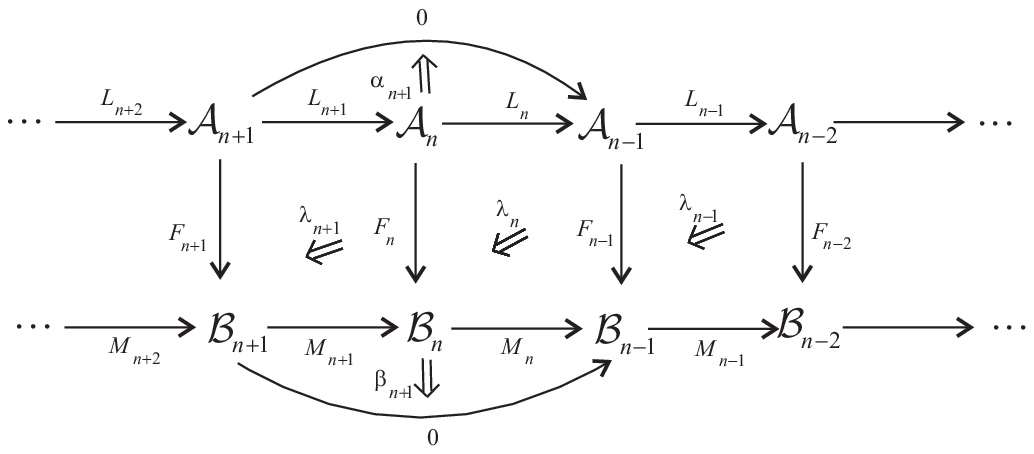}}
\end{center}
where $F_{n}:\cA_n\rightarrow \cB_n$ is 1-morphism in (2-SGp),
$\lambda_{n}:F_{n-1}\circ L_{n}\Rightarrow M_{n}\circ F_n$ is
2-morphism in (2-SGp), for each $n$, making the following diagram
commutative
\begin{center}
\scalebox{0.9}[0.85]{\includegraphics{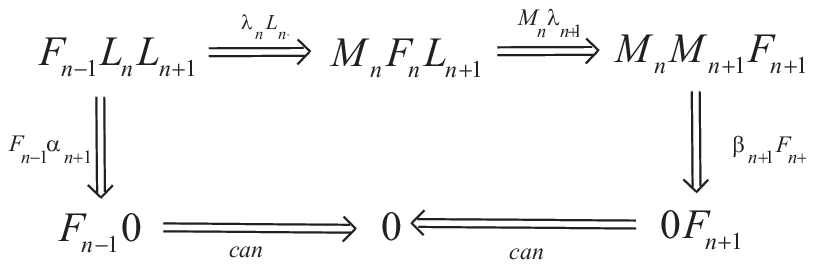}}
\end{center}
Such a morphism induces, for each $n$, a morphism of homology
symmetric 2-groups
$\cH_{n}(F_{\cdot}):\cH_{n}(\cA_{\cdot})\rightarrow
\cH_{n}(\cB_{\cdot})$ from the universal properties of relative
kernel and cokernel. It can be described explicitly.

Given an object $(A_n\in obj(\cA_{n}),a_n:L_n(A_n)\rightarrow 0)$ of
$\cH_{n}(\cA_{\cdot})$ with $L_{n-1}(a_n)=(\alpha_n)_{A_n}$, we have
$\cH_{n}(F_{\cdot})(A_n,a_n)=(F_n(A_n)\in
obj(\cB_{n}),b_n:M_{n}(F_n(A_n))\rightarrow 0)$, where $b_n$ is the
composition
$M_{n}(F_n(A_n))\xrightarrow[]{(\lambda_n)_{A_n}^{-1}}F_{n-1}L_{n}(A_n)\\
\xrightarrow[]{F_{n-1}(a_n)}F_{n-1}(0)\backsimeq 0$, together with
$M_{n-1}(b_n)=(\beta_n)_{F_n(A_n)}$. In fact, from the commutative
diagram of $\lambda_n$, we have the following commutative diagram
\begin{center}
\scalebox{0.9}[0.85]{\includegraphics{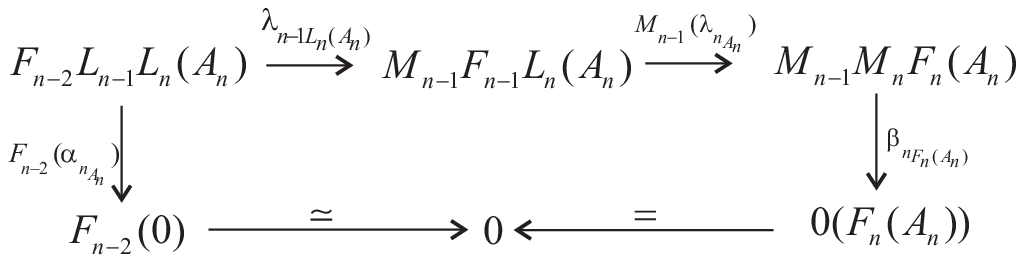}}
\end{center}
Moreover, consider 2-morphism $\lambda_{n-1}:F_{n-2}\circ
L_{n-1}\Rightarrow M_{n-1}\circ F_{n-1}$ and a morphism
$a_n:L_n(A_n)\rightarrow 0$ in $\cA_{n-1}$, we have the following
commutative diagram
\begin{center}
\scalebox{0.9}[0.85]{\includegraphics{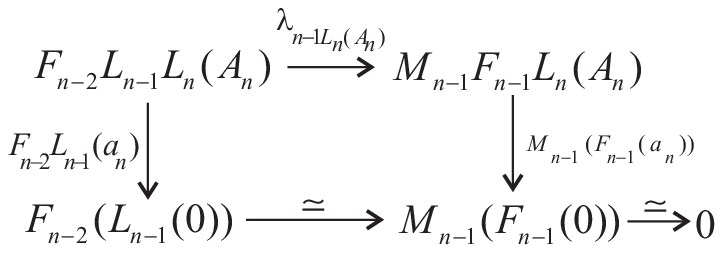}}
\end{center}
Then, by the above two diagrams, we have
$M_{n-1}(b_n)=(\beta_n)_{F_n(A_n)}$, i.e. $(F_n(A_n),b_n)$ is an
object of $\cH_{n}(\cB_{\cdot})$.

Given a morphism $[X_{n+1}\in obj(\cA_{n+1}),x_{n+1}:A_n\rightarrow
L_{n-1}(X_{n+1})+A_{n}^{'}]:(A_n,a_n)\rightarrow(A_{n}^{'},a_{n}^{'})$
in $\cH_{n}(\cA_{\cdot})$, satisfying the condition as in the above
definition. We have $
\cH_{n}(F_{\cdot})[X_{n+1},x_{n+1}]=[F_{n+1}(X_{n+1})\in
obj(\cB_{n+1}),\overline{x_{n+1}}:F_{n}(A_n)\rightarrow
M_{n+1}(F_{n+1}(X_{n+1}))+F_{n}(A_{n}^{'})]:(F_{n}(A_{n},b_n)\rightarrow(F_n(A_{n}^{'}),b_{n}^{'}),
$ where $\overline{x_{n+1}}$ is the composition
$F_{n}(A_n)\xrightarrow[]{F_{n}(x_{n+1})}F_{n}(L_{n+1}(X_{n+1}+A_{n}^{'}))\backsimeq
F_{n}L_{n+1}(X_{n+1})+F_n(A_{n}^{'})\xrightarrow[]{(\lambda_{n+1})_{X_{n+1}}+1}M_{n+1}(F_{n+1}(X_{n+1}))+F_{n}(A_{n}^{'})
$ and such that the following diagram commutes
\begin{center}
\scalebox{0.9}[0.85]{\includegraphics{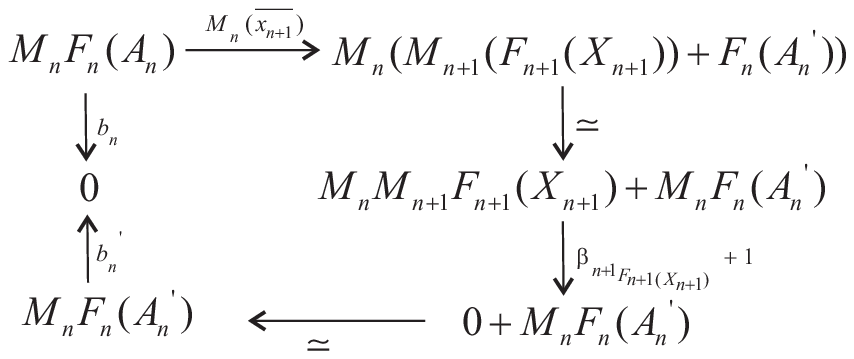}}
\end{center}
In fact, we have the following commutative diagrams
\begin{center}
\scalebox{0.9}[0.85]{\includegraphics{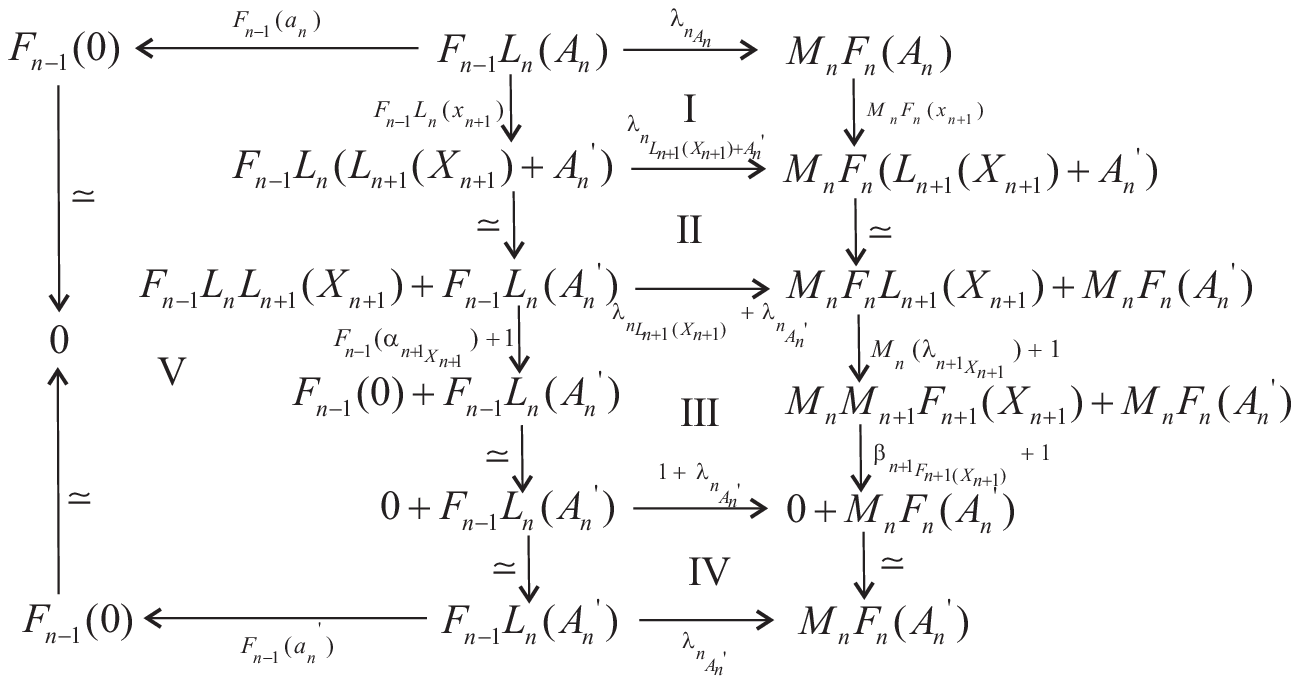}}
\end{center}
The commutativity of I follows from $\lambda_{n}$ is a natural
transformation. The commutativity of II follows from $\lambda_{n}$
is a 2-morphism. The commutativity of III follows from the
commutativity of $\lambda_{n}$ in definition. The commutativity of
IV is obvious. The commutativity of V follows from the operation of
$F_{n-1}$ on the commutative diagram of $[X_{n+1},x_{n+1}]$.

$\cH_{n}(F_{\cdot})$ is a morphism in (2-SGp) follows from the
properties of $F_{n}$.



\begin{Rek}
1. For a complex of symmetric 2-groups which is relative 2-exact in
each point, the (co)homology symmetric 2-groups are always zero
symmetric 2-group(only one object and one morphism)(\cite{11}).

2. For morphisms
$\cA_{\cdot}\xrightarrow[]{(F_{\cdot},\lambda_{\cdot})}\cB_{\cdot}\xrightarrow[]{(G_{\cdot},\mu_{\cdot})}\cC_{\cdot}$
of complexes of symmetric 2-groups, their composite is given by
$(G_{n}\circ F_{n},(\mu_n\circ F_{n+1})\star (G_n\circ \lambda_n))$,
for $n\in \mathds{Z}$, where $\star$ is the vertical composition of
2-morphisms in 2-category(\cite{19}). Moreover,
$\cH_{n}(G_{\cdot}\circ F_{\cdot})\backsimeq \cH_{n}(G_{\cdot})\circ
\cH_n(F_{\cdot})$ of homology symmetric 2-groups.
\end{Rek}

\begin{Def}
Let $(F_{\cdot},\lambda_{\cdot}),
(G_{\cdot},\mu_{\cdot}):(\cA_{\cdot},L_{\cdot},\alpha_{\cdot})\rightarrow
(\cB_{\cdot},M_{\cdot},\beta_{\cdot})$ be two morphisms of 2-chain
complexes of symmetric 2-groups. If there is a family of 1-morphisms
$\{H_{n}:\cA_{n}\rightarrow\cB_{n+1}\}_{n\in \mathds{Z}}$ and a
family of 2-morphisms $\{\tau_{n}:F_{n}\Rightarrow M_{n+1}\circ
H_{n}+H_{n-1}\circ L_n+G_{n}:\cA_{n}\rightarrow \cB_n\}_{n\in
\mathds{Z}}$ satisfying the obvious compatible conditions, i.e. the
following diagram commutes
\begin{center}
\scalebox{0.9}[0.85]{\includegraphics{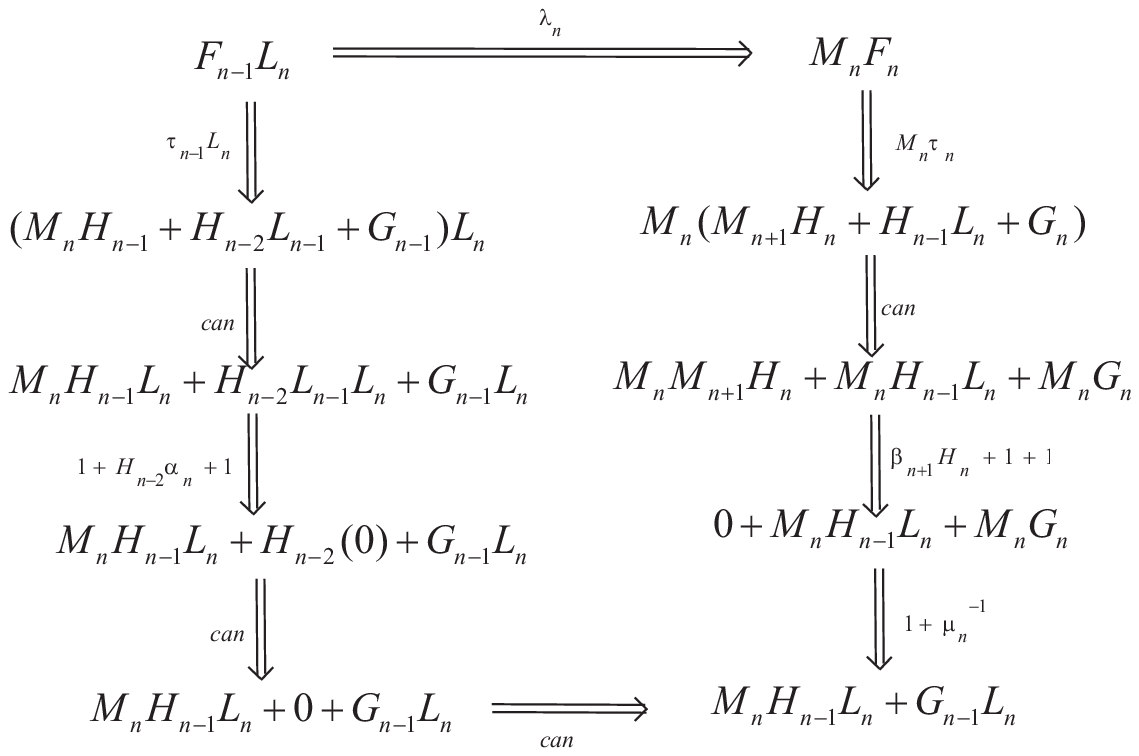}}
\end{center}
We call the above morphisms $(F_{\cdot},\lambda_{\cdot}),
(G_{\cdot},\mu_{\cdot})$ are 2-chain homotopy.
\end{Def}

\begin{Prop}
Let $(F_{\cdot},\lambda_{\cdot}),
(G_{\cdot},\mu_{\cdot}):(\cA_{\cdot},L_{\cdot},\alpha_{\cdot})\rightarrow
(\cB_{\cdot},M_{\cdot},\beta_{\cdot})$ be two morphisms of 2-chain
complexes of symmetric 2-groups. If they are 2-chain homotopy, there
is an equivalence $\cH_{n}(F_{\cdot})\backsimeq \cH_{n}(G_{\cdot})$
between induced morphisms.
\end{Prop}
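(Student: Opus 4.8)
The plan is to produce, for each $n$, a natural monoidal transformation $\Theta_{n}\colon \cH_{n}(F_{\cdot})\Rightarrow\cH_{n}(G_{\cdot})$ in (2-SGp), assembled from the homotopy data $\{H_{n}\}$ and $\{\tau_{n}\}$, and then to note that it is automatically an equivalence, since every morphism in a symmetric 2-group is an isomorphism, so a 2-morphism of symmetric 2-groups whose underlying datum is a natural transformation is invertible. Throughout I work with the explicit descriptions of $\cH_{n}(\cA_{\cdot})$, $\cH_{n}(\cB_{\cdot})$ and of $\cH_{n}(F_{\cdot})$, $\cH_{n}(G_{\cdot})$ recorded above.

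First I would fix an object $(A_{n},a_{n})$ of $\cH_{n}(\cA_{\cdot})$, so $A_{n}\in \mathrm{obj}(\cA_{n})$ and $a_{n}\colon L_{n}(A_{n})\rightarrow 0$ with $L_{n-1}(a_{n})=(\alpha_{n})_{A_{n}}$. By the description above, $\cH_{n}(F_{\cdot})(A_{n},a_{n})=(F_{n}(A_{n}),b_{n})$ and $\cH_{n}(G_{\cdot})(A_{n},a_{n})=(G_{n}(A_{n}),c_{n})$, where $b_{n}$ (resp.\ $c_{n}$) is the composite $M_{n}(F_{n}(A_{n}))\xrightarrow{(\lambda_{n})_{A_{n}}^{-1}}F_{n-1}L_{n}(A_{n})\xrightarrow{F_{n-1}(a_{n})}F_{n-1}(0)\backsimeq 0$ (resp.\ with $G$, $\mu$). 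I define the component $(\Theta_{n})_{(A_{n},a_{n})}\colon (F_{n}(A_{n}),b_{n})\rightarrow(G_{n}(A_{n}),c_{n})$ to be the equivalence class $[H_{n}(A_{n}),\theta_{A_{n}}]$, where $H_{n}(A_{n})\in\mathrm{obj}(\cB_{n+1})$ and $\theta_{A_{n}}\colon F_{n}(A_{n})\rightarrow M_{n+1}(H_{n}(A_{n}))+G_{n}(A_{n})$ is obtained from $(\tau_{n})_{A_{n}}\colon F_{n}(A_{n})\rightarrow M_{n+1}H_{n}(A_{n})+H_{n-1}L_{n}(A_{n})+G_{n}(A_{n})$ by post-composing with $1+H_{n-1}(a_{n})+1$ followed by the canonical isomorphism $M_{n+1}H_{n}(A_{n})+H_{n-1}(0)+G_{n}(A_{n})\backsimeq M_{n+1}H_{n}(A_{n})+G_{n}(A_{n})$; thus the "middle" summand of $\tau_{n}$ is killed using $a_{n}$.

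The first thing to verify is that $[H_{n}(A_{n}),\theta_{A_{n}}]$ is genuinely a morphism of $\cH_{n}(\cB_{\cdot})$, i.e.\ that $\theta_{A_{n}}$ satisfies the compatibility condition relating it to $b_{n}$, $c_{n}$ and $(\beta_{n+1})_{H_{n}(A_{n})}$. I would obtain this by pasting the defining commutative diagram of the 2-chain homotopy (Definition~3, the diagram involving $\tau_{n}$, $\tau_{n-1}$, $\lambda_{n}$, $\mu_{n}$ and $\beta_{n+1}$ through $H_{n},H_{n-1},L_{n},M_{n+1}$) together with the coherence diagrams of the symmetric monoidal structure of $\cB_{n}$ and the commuting squares defining $b_{n}$ and $c_{n}$: applying $M_{n}$ to $\theta_{A_{n}}$, the term $H_{n-1}L_{n}(A_{n})$ collapses to $H_{n-1}(0)$ via $H_{n-1}(a_{n})$, and the homotopy condition forces the resulting composite $M_{n}(F_{n}(A_{n}))\rightarrow M_{n}M_{n+1}H_{n}(A_{n})+M_{n}G_{n}(A_{n})\xrightarrow{(\beta_{n+1})_{H_{n}(A_{n})}+c_{n}}0$ to equal $b_{n}$, which is exactly what is required. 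I expect this step — correctly inserting the associativity and symmetry isomorphisms of $\cB_{n}$ so that the three-summand decomposition of $\tau_{n}$ degenerates coherently — to be the technical heart, and the main obstacle, of the whole argument.

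Once this is in place, the remaining verifications are formal. Naturality of $\Theta_{n}$: given a morphism $[Y_{n+1},y_{n+1}]\colon (A_{n},a_{n})\rightarrow(A_{n}',a_{n}')$ in $\cH_{n}(\cA_{\cdot})$, both composites around the relevant square are represented by pairs whose $\cB_{n+1}$-object is built from $H_{n}(A_{n}')$ and $H_{n+1}(Y_{n+1})$ and whose morphism is assembled from $(\tau_{n})$, $(\tau_{n+1})$, $H_{n}(y_{n+1})$ and $H_{n+1}$ applied to a coherence isomorphism; the two are identified in the relative cokernel by an explicit witness in $\mathrm{obj}(\cB_{n+2})$ coming from the fact that $\{\tau_{n}\}$ is a \emph{natural} family, used in conjunction with Remark~2 and Definition~2. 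Compatibility of $\Theta_{n}$ with the group operation follows because each $\tau_{n}$ is required to be a morphism in (2-SGp), hence monoidal, so $(\Theta_{n})_{(A_{n},a_{n})+(A_{n}'',a_{n}'')}$ agrees with $(\Theta_{n})_{(A_{n},a_{n})}+(\Theta_{n})_{(A_{n}'',a_{n}'')}$ up to the structural isomorphisms. This makes $\Theta_{n}$ a 2-morphism of symmetric 2-groups, and therefore $\cH_{n}(F_{\cdot})\backsimeq\cH_{n}(G_{\cdot})$, as claimed.
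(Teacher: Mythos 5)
Your proposal is correct and follows essentially the same route as the paper's proof: you define the component of the 2-morphism at $(A_{n},a_{n})$ to be the class $[H_{n}(A_{n}),\theta_{A_{n}}]$ with $\theta_{A_{n}}$ obtained from $(\tau_{n})_{A_{n}}$ by collapsing the middle summand via $H_{n-1}(a_{n})$, verify membership in $\cH_{n}(\cB_{\cdot})$ from the homotopy compatibility diagram, and witness naturality in $\cB_{n+2}$ by applying $H_{n+1}$ to the $\cA_{n+1}$-component of a morphism --- exactly the paper's construction of $\varphi_{n}$. The closing observation that a 2-morphism in (2-SGp) is automatically invertible is the same (implicit) step the paper relies on when it reduces the equivalence to constructing $\varphi_{n}$.
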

\begin{proof}
In order to prove the equivalence between two morphisms, it will
suffice to construct a 2-morphism
$\varphi_n:\cH_{n}(F_{\cdot})\Rightarrow \cH_{n}(G_{\cdot})$, for
each $n$.

There are induced morphisms
\begin{align*}
&\cH_{n}(F_{\cdot}):\cH_{n}(\cA_{\cdot})\rightarrow \cH_{n}(\cB_{\cdot})\\
&\hspace{1.5cm}(A_n,a_n)\mapsto(F_n (A_n),b_n),\\
&\hspace{0.8cm}[X_{n+1},x_{n+1}]\mapsto
[F_{n+1}(X_{n+1}),\overline{x_{n+1}}]
\end{align*}
and
\begin{align*}
&\cH_{n}(G_{\cdot}):\cH_{n}(\cA_{\cdot})\rightarrow \cH_{n}(\cB_{\cdot})\\
&\hspace{1.5cm}(A_n,a_n)\mapsto(G_n (A_n),\overline{b_n}),\\
&\hspace{0.8cm}[X_{n+1},x_{n+1}]\mapsto
[G_{n+1}(X_{n+1}),\overline{x_{n+1}}^{'}]
\end{align*}
For any object $(A_n,a_n)$ of $\cH_{n}(\cA_{\cdot})$, let
$Y_{n+1}=H_{n}(A_n)$. Consider the following composition morphism
$F_n(A_n)\xrightarrow[]{(\tau_{n})_{A_n}} (M_{n+1}\circ
H_n+H_{n-1}\circ
L_n+G_n)(A_n)\xrightarrow[]{1+H_{n-1}(a_n)+1}M_{n+1}(H_n(A_n))+H_{n-1}(0)+G_n(A_n)\backsimeq
M_{n+1}(Y_{n+1})+0+G_n(A_n)\backsimeq M_{n+1}(Y_{n+1})+G_n(A_n)$ of
$\cB_{n}$. We get a morphism $[Y_{n+1}\in obj(\cB_{n+1}),
y_{n+1}:F_{n}(A_n)\rightarrow
M_{n+1}(Y_{n+1})+G_{n}(A_n)]:(F_{n}(A_n),b_n)\rightarrow
(G_{n}(A_n),\overline{b_{n}})$ of $\cH_{n}(\cB_{\cdot})$ such that
the following diagram commutes
\begin{center}
\scalebox{0.9}[0.85]{\includegraphics{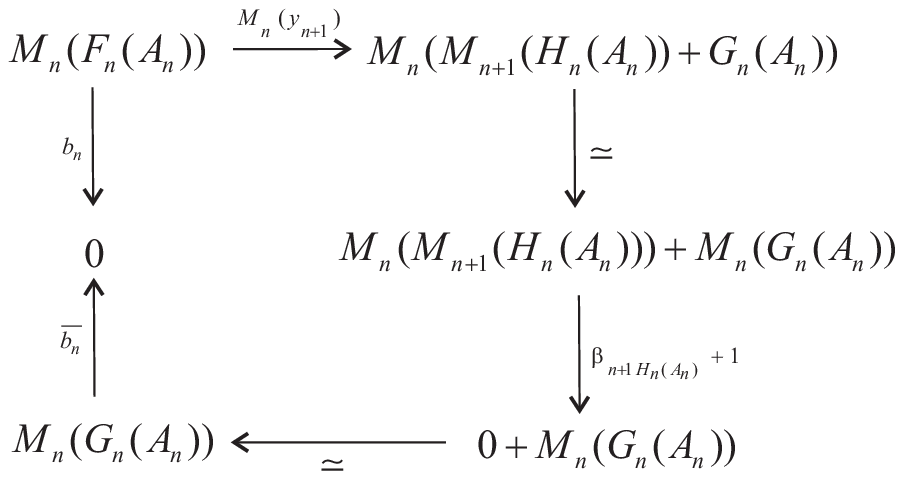}}
\end{center}
From the compatible condition of $(\tau_{n})_{n\in\mathds{Z}}$, we
have the following commutative diagram
\begin{center}
\scalebox{0.7}[0.7]{\includegraphics{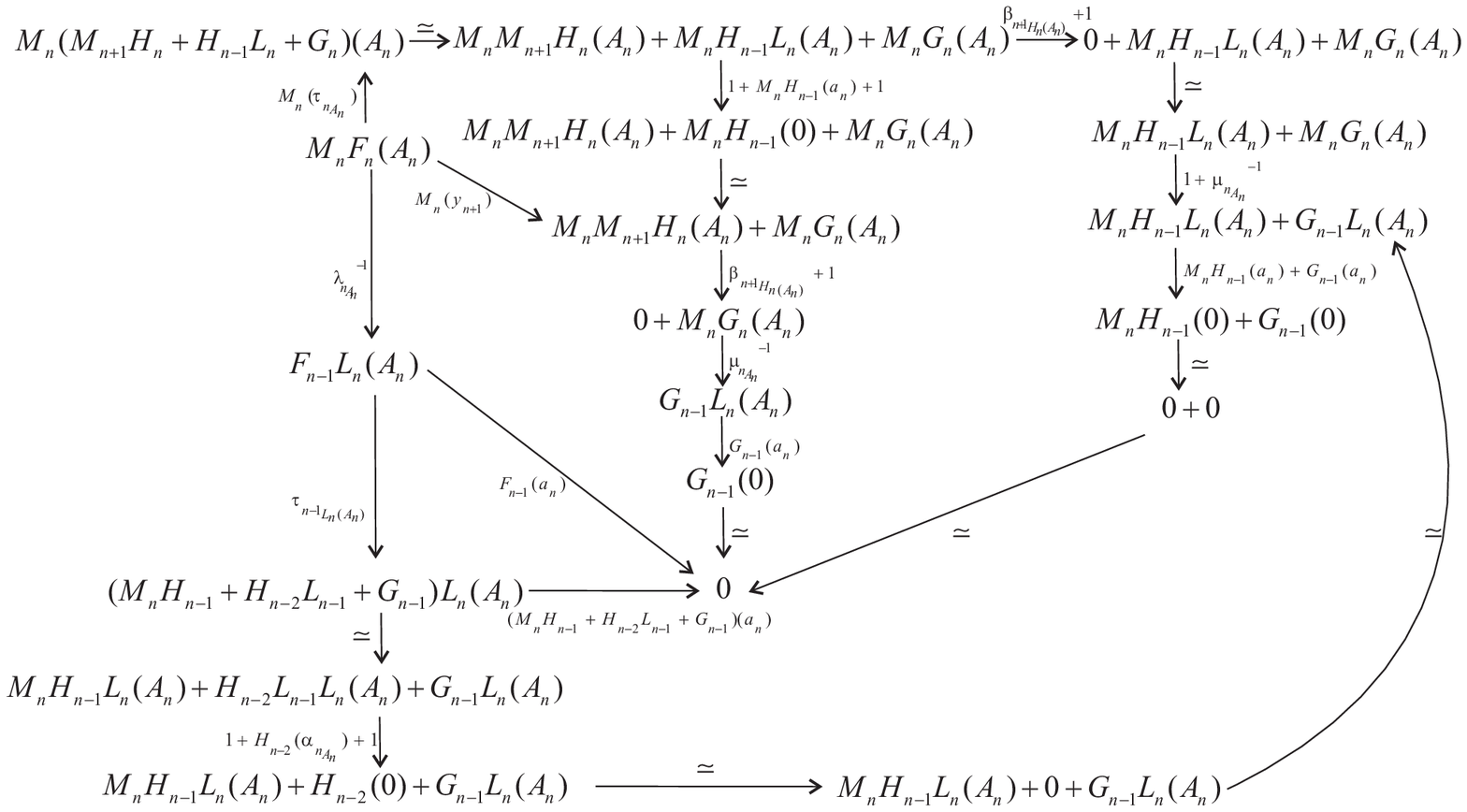}}
\end{center}
So $[Y_{n+1},y_{n+1}]$ is a morphism in $\cH_{n}(\cB_{\cdot})$, then
we can define a 2-morphism $\varphi_{n}$. For any morphism
$[X_{n+1},x_{n+1}]:(A_n,a_n)\rightarrow (A_{n}^{'},a_{n}^{'})$ in
$\cH_{n}(\cA_{\cdot})$, where $X_{n+1}\in obj(\cA_{n+1}),\
x_{n+1}:A_n\rightarrow L_{n+1}(X_{n+1})+A_{n}^{'}$ satisfying the
following commutative diagram
\begin{center}
\scalebox{0.9}[0.85]{\includegraphics{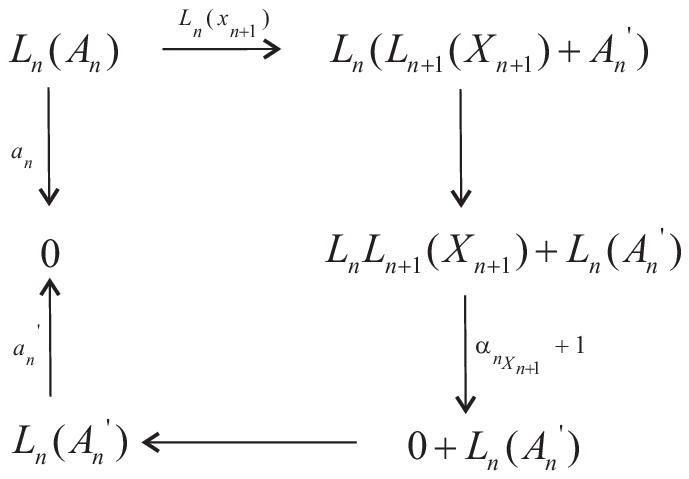}}
\end{center}
$\cH_{n}(F_{\cdot})[X_{n+1},x_{n+1}]=[F_{n+1}(X_{n+1}),\overline{x_{n+1}}]$,
$\cH_{n}(G_{\cdot})[X_{n+1},x_{n+1}]=[G_{n+1}(X_{n+1}),\overline{x_{n+1}}^{'}]$,
where $\overline{x_{n+1}}$ and $\overline{x_{n+1}}^{'}$ are the
following composition morphisms
$\overline{x_{n+1}}:F_{n}(A_n)\xrightarrow[]{F_{n}(x_{n+1})}
F_{n}(L_{n+1}(X_{n+1})+A_{n}^{'})\backsimeq (F_n\circ
L_{n+1})(X_{n+1})+F_n(A_{n}^{'})\xrightarrow[]{(\lambda_{n+1})_{X_{n+1}}+1}M_{n+1}(F_{n+1}(X_{n+1}))+F_{n}(A_{n}^{'})$,
$\overline{x_{n+1}}^{'}:G_{n}(A_n)\xrightarrow[]{G_{n}(x_{n+1})}
G_{n}(L_{n+1}(X_{n+1})+A_{n}^{'})\backsimeq (G_n\circ
L_{n+1})(X_{n+1})+G_n(A_{n}^{'})\xrightarrow[]{(\mu_{n+1})_{X_{n+1}}+1}M_{n+1}(G_{n+1}(X_{n+1}))+G_{n}(A_{n}^{'}).$

Then we have the following commutative diagram
\begin{center}
\scalebox{0.9}[0.85]{\includegraphics{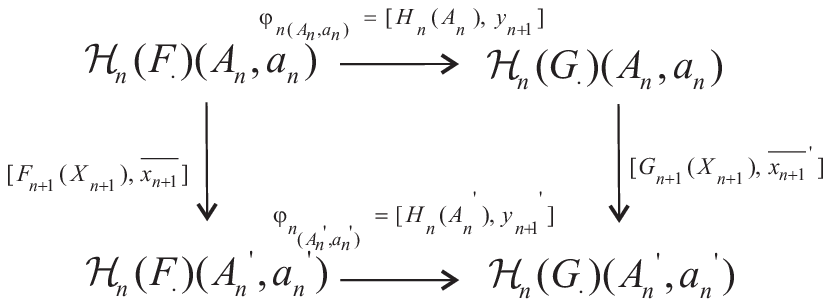}}
\end{center}
There exist $[Y_{n+2}\triangleq
H_{n+1}(X_{n+1}),y_{n+2}]:([H_{n}(A_{n}^{'}),y_{n+1}^{'}]\circ
[F_{n+1}(X_{n+1}),\overline{x_{n+1}}])\rightarrow
[G_{n+1}(X_{n+1}),\overline{x_{n+1}}^{'}]\circ[H_{n}(A_{n}),y_{n+1}]$
induced by $\tau_{\cdot}.$ In fact,
$((H_{n}(A_{n}^{'}),y_{n+1}^{'})\circ
F_{n+1}(X_{n+1}),\overline{x_{n+1}}))=[F_{n+1}(X_{n+1})+H_{n}(A_{n}^{'}),(1+y_{n+1}^{'})\circ
\overline{x_{n+1}}^{'}]$,
$[G_{n+1}(X_{n+1}),\overline{x_{n+1}}]\circ[H_{n}(A_{n}),y_{n+1}]=[H_{n}(A_n)+G_{n+1}(X_{n+1}),(1+\overline{x_{n+1}}^{'})\circ
y_{n+1}]$ from the composition of morphisms in relative cokernel, so
$y_{n+2}$ is the composition morphism
$F_{n+1}(X_{n+1})+H_{n}(A_{n}^{'})\xrightarrow[]{(\tau_{n+1})_{X_{n+1}}+1}(M_{n+2}H_{n+1}+H_{n}L_{n+1}+G_{n+1})(X_{n+1})+H_{n}(A_{n}^{'})\backsimeq
M_{n+2}H_{n+1}(X_{n+1})+H_{n}L_{n+1}(X_{n+1})+G_{n+1}(X_{n+1})+H_{n}(A_{n}^{'})\backsimeq
M_{n+2}H_{n+1}(X_{n+1})+G_{n+1}(X_{n+1})+H_{n}(L_{n+1}(X_{n+1}+A_{n}^{'})\xrightarrow[]{1+1+H_{n}(x_{n+1}^{-1})}
M_{n+2}H_{n+1}(X_{n+1})+G_{n+1}(X_{n+1})+H_{n}(A_{n})\backsimeq
M_{n+2}H_{n+1}(X_{n+1})+H_{n}(A_{n})+G_{n+1}(X_{n+1})$. Moreover the
morphism $[Y_{n+2},y_{n+2}]$ makes the following diagram commute
\begin{center}
\scalebox{0.8}[0.8]{\includegraphics{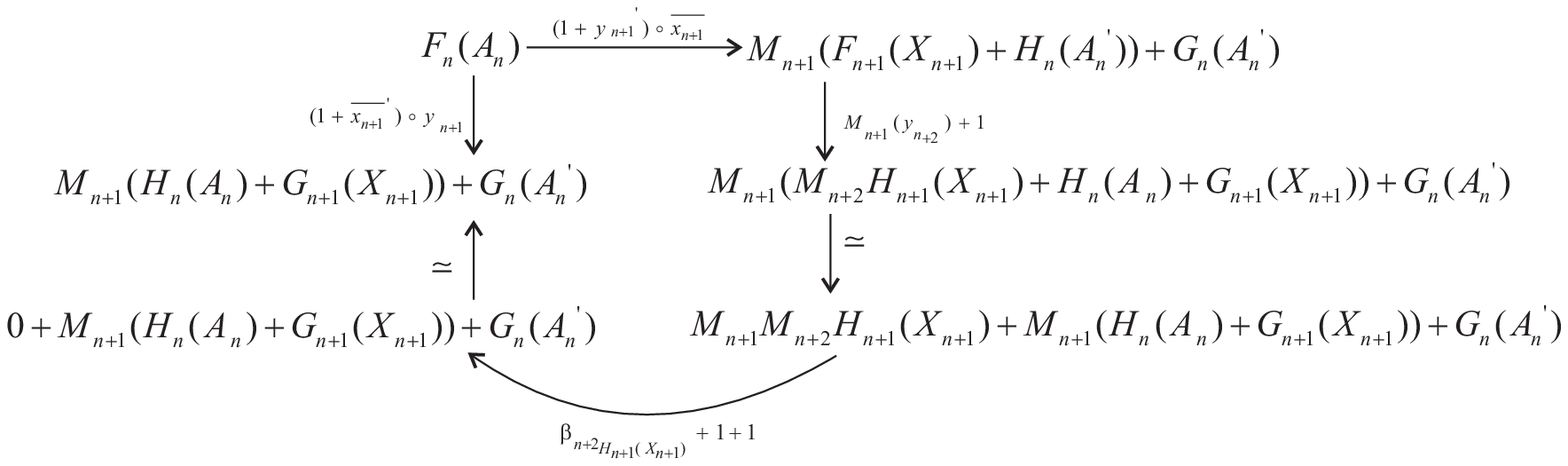}}
\end{center}
for the following several commutative diagrams
\begin{center}
\scalebox{0.8}[0.8]{\includegraphics{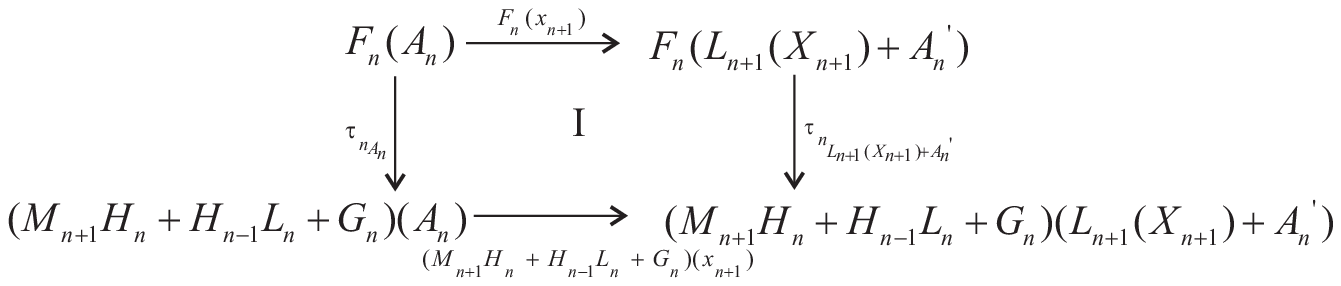}}
\end{center}
\begin{center}
\scalebox{0.8}[0.8]{\includegraphics{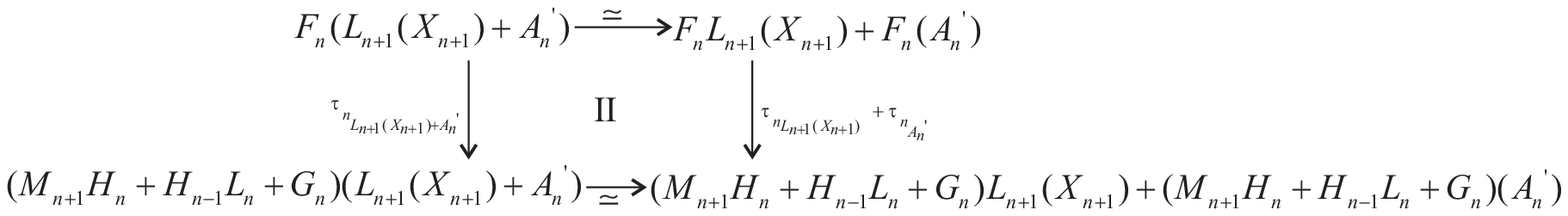}}
\end{center}
\begin{center}
\scalebox{0.8}[0.8]{\includegraphics{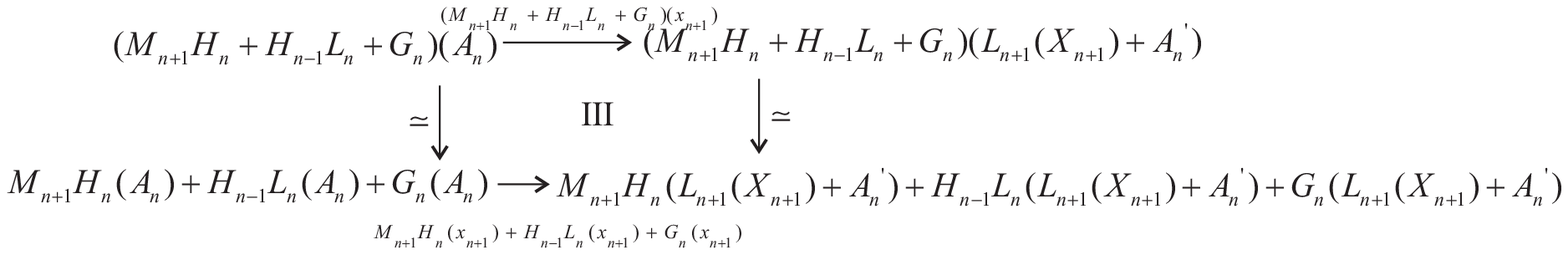}}
\end{center}
\begin{center}
\scalebox{0.7}[0.7]{\includegraphics{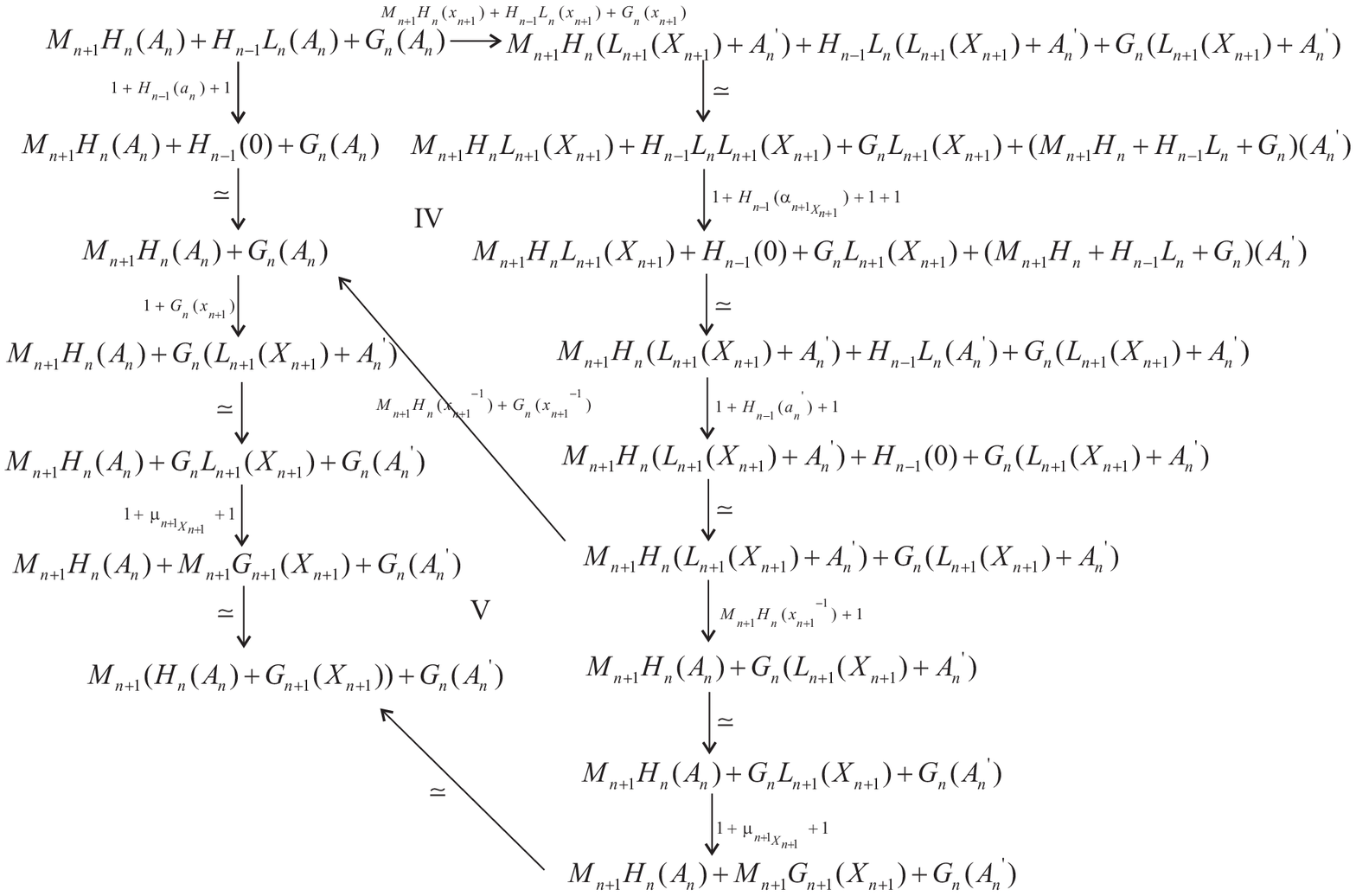}}
\end{center}
\begin{center}
\scalebox{0.7}[0.7]{\includegraphics{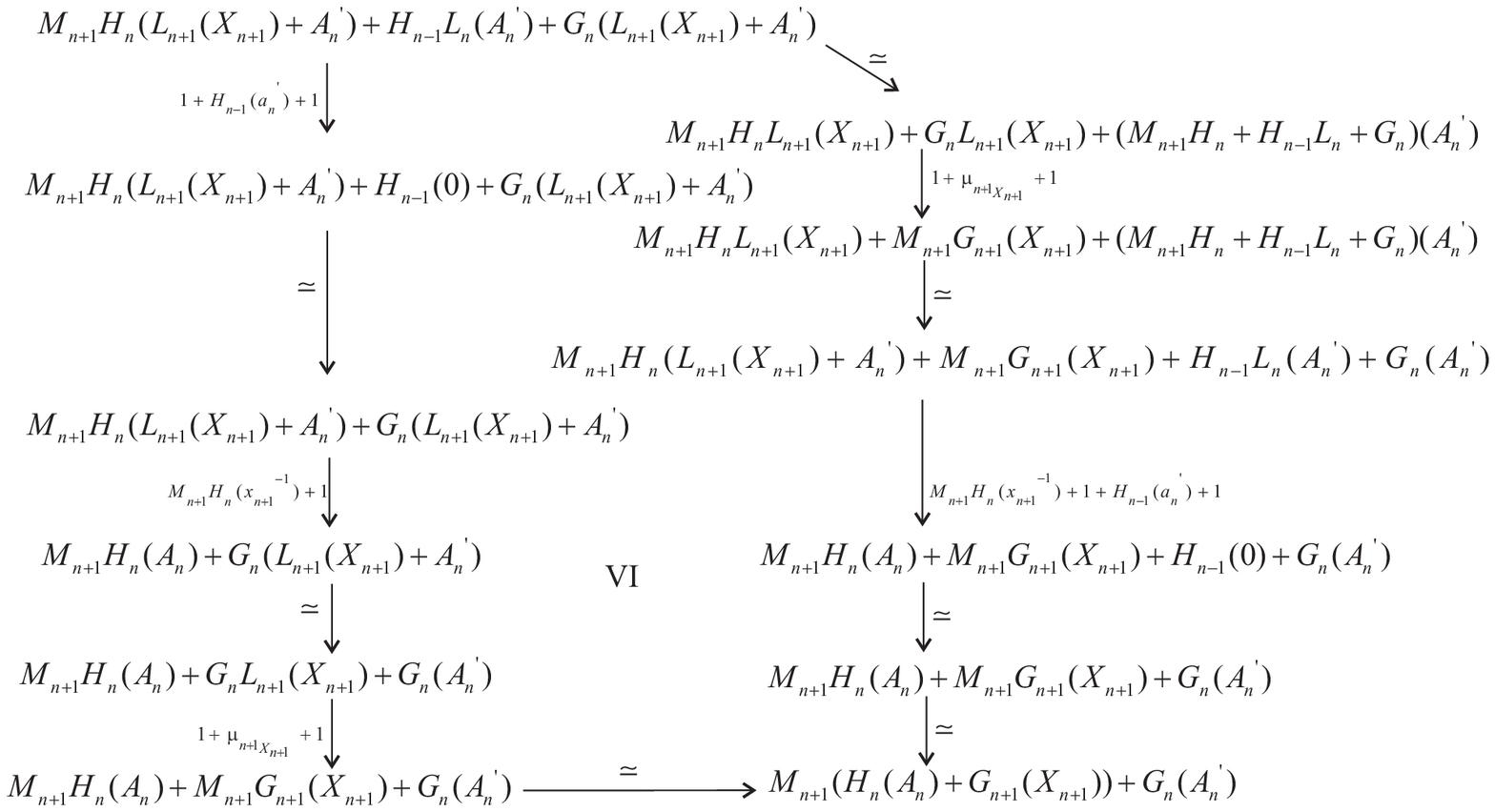}}
\end{center}
\begin{center}
\scalebox{0.7}[0.7]{\includegraphics{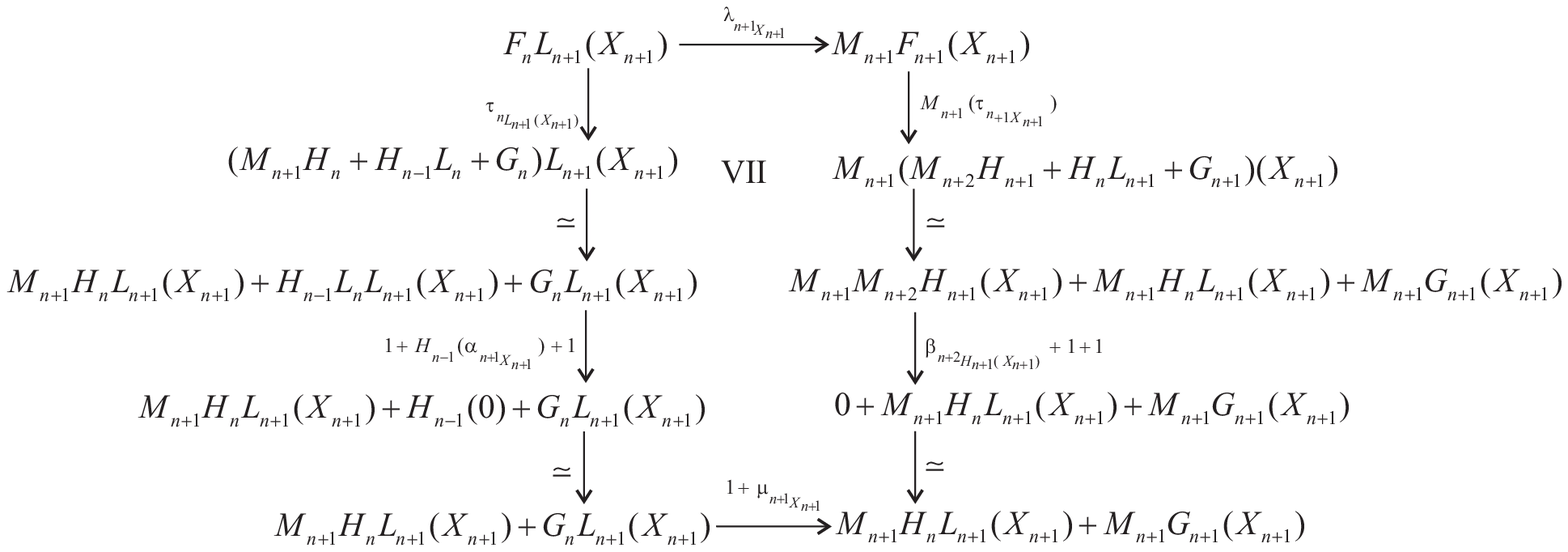}}
\end{center}
where I is commutative because $\tau_n$ is a natural transformation.
II, III follow from the properties of symmetric monoidal functors.
IV follows from operation of $H_{n-1}$ on the commutative diagram of
$[X_{n+1},x_{n+1}]$. V and VI follow from the properties of
symmetric 2-groups. VII follows from the commtutative diagram of
$\tau_{n+1}$.

Moreover, for any two objects $(A_n,a_n),\ (A_n^{'},a_n^{'})$ of
$\cH_{n}(\cA_{\cdot})$,
$(A_n,a_n)+(A_n^{'},a_n^{'})=(A_{n}+A_{n}^{'},a_{n}+a_n^{'})$ with
$L_{n-1}(a_n+a_{n}^{'})=(\alpha_{n})_{A_n+A_n^{'}}$, we have the
following commutative diagram
\begin{center}
\scalebox{0.9}[0.85]{\includegraphics{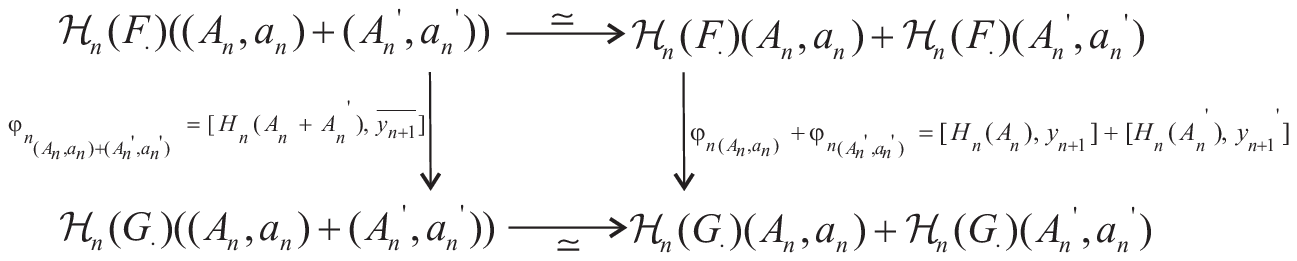}}
\end{center}
where $\overline{y_{n+1}}, y_{n+1},y_{n+1}^{'}$ are induced by
$\tau_{n}$ as above. In fact,
$[H_{n}(A_n),y_{n+1}]+[H_n(A_{n}^{'}),y_{n+1}^{'}]=[H_{n}(A_n)+H_n(A_{n}^{'}),y_{n+1}+y_{n+1}^{'}]$,
so
$(\varphi_{n})_{(A_n,a_n)+(A_n^{'},a_{n}^{'})}=(\varphi_{n})_{(A_n,a_n)}+(\varphi_{n})_{(A_n^{'},a_n^{'})}$,
then the above diagram commutes.

Then from above, we proved $\varphi_n$ is a 2-morphism in (2-SGp),
for each $n$.
\end{proof}

From the definition of 2-functors, we have the following Lemma.
\begin{Lem}Let $T:$(2-SGp)$\rightarrow$(2-SGp) be a 2-functor, $(F_{\cdot},\lambda_{\cdot}),
(G_{\cdot},\mu_{\cdot}):(\cA_{\cdot},L_{\cdot},\alpha_{\cdot})\rightarrow
(\cB_{\cdot},M_{\cdot},\beta_{\cdot})$ be two 2-chain homotopy
morphisms of complexes in (2-SGp). Then
$T(F_{\cdot},\lambda_{\cdot})$ is 2-chain homotopic to
$T(G_{\cdot},\mu_{\cdot})$ in (2-SGp).
\end{Lem}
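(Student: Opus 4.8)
The plan is to transport the given homotopy data directly along $T$. Write the hypothesis as a family of $1$-morphisms $\{H_{n}:\cA_{n}\rightarrow\cB_{n+1}\}_{n\in\mathds{Z}}$ and a family of $2$-morphisms $\{\tau_{n}:F_{n}\Rightarrow M_{n+1}\circ H_{n}+H_{n-1}\circ L_{n}+G_{n}\}_{n\in\mathds{Z}}$ satisfying the compatibility diagram of the definition of $2$-chain homotopy above. I would set $\widetilde{H}_{n}:=T(H_{n}):T(\cA_{n})\rightarrow T(\cB_{n+1})$, manufacture suitable $2$-morphisms $\widetilde{\tau}_{n}$ out of $T(\tau_{n})$, and then check that $(\{\widetilde{H}_{n}\},\{\widetilde{\tau}_{n}\})$ is a $2$-chain homotopy from $T(F_{\cdot},\lambda_{\cdot})$ to $T(G_{\cdot},\mu_{\cdot})$.

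First I would make explicit the coherence cells carried by the $2$-functor $T$: invertible $2$-cells $\gamma_{g,f}:T(g)\circ T(f)\xrightarrow{\sim}T(g\circ f)$ and $T(1)\xrightarrow{\sim}1$ witnessing preservation of composition, together with the isomorphisms $T(f)+T(g)\xrightarrow{\sim}T(f+g)$ and $T(0)\xrightarrow{\sim}0$ expressing that $T$ respects the monoidal sum and zero of each hom symmetric $2$-group (this is part of the definition of $2$-functor relevant here). Using these, $T$ sends $(\cA_{\cdot},L_{\cdot},\alpha_{\cdot})$ and $(\cB_{\cdot},M_{\cdot},\beta_{\cdot})$ to $2$-chain complexes, with differentials $T(L_{n})$, $T(M_{n})$ and structure $2$-cells obtained by conjugating $T(\alpha_{n})$, $T(\beta_{n})$ with the $\gamma$'s; and it sends $(F_{\cdot},\lambda_{\cdot})$, $(G_{\cdot},\mu_{\cdot})$ to morphisms of complexes whose connecting $2$-cells are $T(\lambda_{n})$, $T(\mu_{n})$ conjugated by $\gamma$'s. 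The requisite commuting squares hold because $T$ carries a commuting diagram of $2$-cells to a commuting diagram of $2$-cells and the $\gamma$'s are coherent; this part is purely formal.

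Next I would define $\widetilde{\tau}_{n}$ as $T(\tau_{n}):T(F_{n})\Rightarrow T(M_{n+1}\circ H_{n}+H_{n-1}\circ L_{n}+G_{n})$ post-composed with the inverse of the canonical isomorphism
\[
T(M_{n+1})\circ T(H_{n})+T(H_{n-1})\circ T(L_{n})+T(G_{n})\ \xrightarrow{\ \sim\ }\ T(M_{n+1}\circ H_{n}+H_{n-1}\circ L_{n}+G_{n})
\]
assembled from the $\gamma$'s and the additivity isomorphisms above. It then remains only to verify that $(\{\widetilde{H}_{n}\},\{\widetilde{\tau}_{n}\})$ satisfies the compatibility diagram defining $2$-chain homotopy, now for the morphisms $T(F_{\cdot},\lambda_{\cdot})$ and $T(G_{\cdot},\mu_{\cdot})$.

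This last verification is the only real obstacle. The strategy is to apply $T$ to the compatibility diagram satisfied by $(\{H_{n}\},\{\tau_{n}\})$: as $T$ is a $2$-functor, it takes that commuting diagram of $2$-cells to a commuting diagram among the $T$-images, and one then pastes in the coherence cells (the $\gamma$'s, the additivity isomorphisms, and the coherence axioms relating them, in particular the interchange of $+$ with $\circ$) so as to rewrite every edge in terms of $T(L_{n})$, $T(M_{n})$, $T(F_{n})$, $T(G_{n})$, $T(H_{n})$ and $\widetilde{\tau}_{n}$. No new idea is needed: this is the $2$-categorical analogue of the elementary fact that an additive functor carries a chain homotopy to a chain homotopy, and the cost is entirely the bookkeeping of coherence cells. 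Once the two commuting diagrams are matched up, the compatibility condition for $(\{\widetilde{H}_{n}\},\{\widetilde{\tau}_{n}\})$ follows, and hence $T(F_{\cdot},\lambda_{\cdot})$ is $2$-chain homotopic to $T(G_{\cdot},\mu_{\cdot})$.
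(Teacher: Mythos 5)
Your proposal is correct and is essentially the argument the paper intends: the paper in fact gives no proof at all, merely asserting the lemma ``from the definition of 2-functors,'' and your write-up --- transporting $H_{n}$ and $\tau_{n}$ along $T$ and pasting in the compositor cells $\gamma_{g,f}$ together with the additivity isomorphisms to produce $\widetilde{H}_{n}=T(H_{n})$ and $\widetilde{\tau}_{n}$ --- is exactly the natural expansion of that assertion. The one caveat worth recording is that the sum $+$ occurring in $\tau_{n}:F_{n}\Rightarrow M_{n+1}\circ H_{n}+H_{n-1}\circ L_{n}+G_{n}$ forces $T$ to preserve the monoidal sum of $1$-morphisms (i.e.\ to be additive), a hypothesis the statement omits but which both you and the paper implicitly use, and which is satisfied in the paper's only applications of the lemma, where $T$ is an additive 2-functor.
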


\section{Projective resolution of symmetric 2-groups}

In this section we will give the construction of projective
resolution of any symmetric 2-group.

\begin{Def} Let $\cM$ be a symmetric 2-group.
A projective resolution of $\cM$ in (2-SGp) is a 2-chain complex of
symmetric 2-groups which is relative 2-exact in each point as in the
following diagram
\begin{center}
\scalebox{0.9}[0.85]{\includegraphics{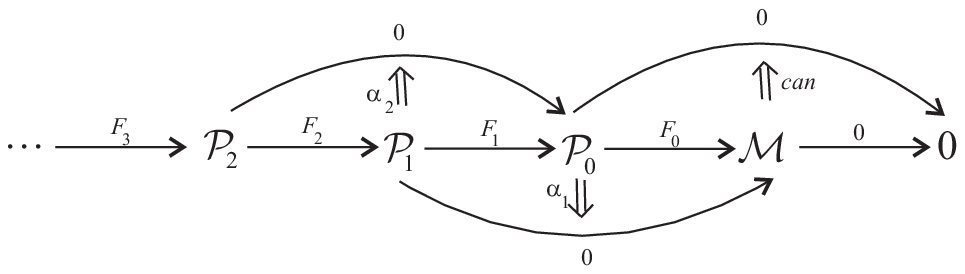}}
\end{center}
with $\cP_{n}(n\geq 0)$ projective objects in (2-SGp). i.e. the
above complex is relative 2-exact in each $\cP_{i}$ and $\cM$.
\end{Def}

\begin{Prop}
Every symmetric 2-group $\cM$ has a projective resolution in
(2-SGp).
\end{Prop}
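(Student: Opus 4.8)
The plan is to build the resolution inductively, at each stage killing the homology of the partial complex by mapping in a projective object that surjects appropriately. The essential input is the result from the second paper of this series, quoted in the introduction, that (2-SGp) has enough projectives: for every symmetric 2-group $\cN$ there is a projective object $\cP$ together with an essentially surjective, full morphism $\cP\to\cN$ in (2-SGp). I would like to use a relative-exactness version of this statement, namely: given any morphism $(F,\varphi)\colon\cP\to\cN$ that we want to cover, we can find a projective $\cP'$ and a morphism $\cP'\to\cP$ so that the sequence $\cP'\to\cP\to\cN$ becomes relative 2-exact in $\cP$. This is obtained by applying ``enough projectives'' to the relative kernel $Ker(F,\varphi)$: take a projective $\cP'$ with an essentially surjective full morphism $\cP'\to Ker(F,\varphi)$ and compose with $e_{(F,\varphi)}$; checking that the resulting sequence is relative 2-exact at $\cP$ is exactly the condition in Definition 1 (the induced factorization through the relative kernel is essentially surjective and $\overline{\alpha}$-full), which follows because $\cP'\to Ker(F,\varphi)$ was chosen essentially surjective and full.

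Concretely, I would proceed as follows. First, choose a projective $\cP_0$ and an essentially surjective, full morphism $p\colon\cP_0\to\cM$, with the canonical 2-morphism making $\cP_0\xrightarrow{p}\cM\to 0$ a complex; relative 2-exactness at $\cM$ is exactly essential surjectivity and fullness of $p$. Next, apply the relative version of ``enough projectives'' to $p\colon\cP_0\to\cM$: get a projective $\cP_1$ and $L_1\colon\cP_1\to\cP_0$ together with a 2-morphism $\alpha_1\colon p\circ L_1\Rightarrow 0$ such that $\cP_1\xrightarrow{L_1}\cP_0\xrightarrow{p}\cM$ is relative 2-exact in $\cP_0$. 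Then iterate: having constructed $\cP_{n}\xrightarrow{L_{n}}\cP_{n-1}$ with the complex relative 2-exact at every $\cP_i$ for $i<n$, apply the relative enough-projectives statement to $L_n\colon\cP_n\to\cP_{n-1}$ (more precisely, to the factorization of $L_n$ through $Ker(L_{n-1},\alpha_{n-1})$) to obtain a projective $\cP_{n+1}$, a morphism $L_{n+1}\colon\cP_{n+1}\to\cP_n$, and a 2-morphism $\alpha_{n+1}\colon L_{n-1}\circ L_{n+1}\Rightarrow 0$ making the complex relative 2-exact at $\cP_n$ as well, with the hexagon/compatibility diagram for the family $\{\alpha_k\}$ holding by construction. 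This produces the desired 2-chain complex $\cdots\to\cP_1\to\cP_0\to\cM$ with all $\cP_i$ projective and relative 2-exact in each point, i.e. a projective resolution.

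The step I expect to be the main obstacle — and the one deserving care — is the verification that the relative enough-projectives statement really holds and that the induced factorization is $\overline{\alpha}$-full, not merely full: one must track the 2-morphism data carefully so that the induced functor $\cP_{n+1}'\to Ker(L_n,\alpha_n)$ is genuinely essentially surjective and $\overline{\alpha}$-full in the sense of Definition 1, and simultaneously produce the new structural 2-morphism $\alpha_{n+1}$ and check the compatibility (hexagon) condition in the definition of a 2-chain complex. Everything else — the bookkeeping of composing with $e_{(L_n,\alpha_n)}$, the canonical 2-morphisms at the $\cM$ end — is routine. I would isolate the relative enough-projectives statement as the single nontrivial lemma, proving it once from the enough-projectives theorem of \cite{14} applied to the relative kernel, and then the proposition follows by a clean induction.
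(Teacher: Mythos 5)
Your overall architecture --- cover $\cM$ by a projective, pass to the relative kernel, cover that by a projective, compose with the inclusion $e_{(F_n,\alpha_n)}$, and iterate --- is exactly the paper's construction. The gap is in the input to your ``relative enough-projectives'' lemma. The enough-projectives theorem of \cite{14,20} that the paper invokes supplies only an \emph{essentially surjective} morphism $F_0:\cP_0\rightarrow\cM$ from a projective object; it does not supply a full one, and in general it cannot. For morphisms of symmetric 2-groups, fullness forces injectivity on $\pi_0$ (if $[x]\neq[y]$ in the source while $[Fx]=[Fy]$ in the target, then the empty set $\Hom(x,y)$ must surject onto a nonempty hom-set), so an essentially surjective \emph{and} full cover would make $\pi_0(\cP_0)\rightarrow\pi_0(\cM)$ an isomorphism --- impossible whenever $\pi_0(\cM)$ has torsion, since $\pi_0$ of a projective is free. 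Thus already your first step (``an essentially surjective, full morphism $p:\cP_0\rightarrow\cM$'') is unavailable, and the same obstruction recurs at every relative kernel.

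Correspondingly, you have misread what relative 2-exactness at $\cP_n$ (and at $\cM$) actually demands. It is not plain fullness of the factorization $G_{n+1}:\cP_{n+1}\rightarrow Ker(F_n,\alpha_n)$, but essential surjectivity together with $\overline{\alpha_{n+2}}$-fullness, where the witnesses are permitted to come from $\cP_{n+2}$. This weakening is precisely what allows the construction to go through without full covers, and it is where the paper's proof does its real work: given $g:G_1(x_1)\rightarrow G_1(x_2)$ in $KerF_0$, one forms the object $(x_1+x_2^{*},e_{F_0}(g)+1)$ of $Ker(F_1,\alpha_1)$, uses the essential surjectivity of $G_2:\cP_2\rightarrow Ker(F_1,\alpha_1)$ to find $y\in obj(\cP_2)$ hitting it, and extracts a morphism $f:x_1\rightarrow F_2(y)+x_2$ realizing $g$ up to $\overline{\alpha_2}$. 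In other words, exactness at stage $n$ is only verified once the cover at stage $n+2$ has been chosen; it cannot be arranged ``one step at a time'' as your isolated lemma proposes. The fix is to drop the fullness hypothesis entirely and replace your verification with this two-stages-later argument, which is the content of the paper's check at $\cP_0$.
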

\begin{proof}
We will construct the projective resolution of $\cM$ using the
relative kernel.

For $\cM$, there is an essentially surjective morphism
$F_0:\cP_{0}\rightarrow\cM$, with $\cP_{0}$ projective object in
(2-SGp)(\cite{14,20}). Then we get a sequence as follows
\begin{center}
\scalebox{0.9}[0.85]{\includegraphics{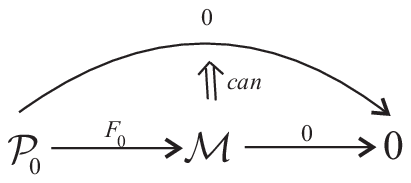}} {\footnotesize S.1.}
\end{center}
where $0:\cM\rightarrow 0$ is the zero morphism\cite{2,6}in (2-SGp),
$0$ is the symmetric 2-group with only one object and one morphism.,
$can$ is the canonical 2-morphism in (2-SGp), which is given by the
identity morphism of only one object of $0$.

From the existence of the relative kernel in (2-SGp), we have the
relative kernel $(Ker(F_0,can),\ e_{(F_0,can)},\
\varepsilon_{(F_0,can)})$
 of the sequence S.1, which is in fact the general kernel
 $(KerF_0,e_{F_0},\varepsilon_{F_0})$ \cite{11}. For the symmetric
 2-group $KerF_0$, there exists an essentially surjective morphism
 $G_1:\cP_{1}\rightarrow KerF_0$, with $\cP_{1}$ projective object in
(2-SGp)(\cite{14,20}). Let $F_{1}=e_{F_{0}}\circ
 G_{1}:\cP_{1}\rightarrow\cP_{0}$. Then we get the following
 sequence
\begin{center}
\scalebox{0.9}[0.85]{\includegraphics{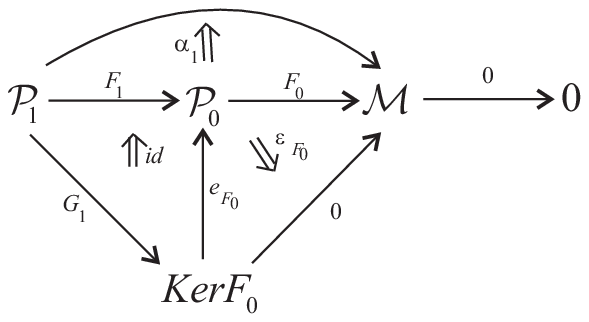}}
\end{center}
where $\alpha_{1}$ is the composition $F_{0}\circ F_{1}=F_{0}\circ
e_{F_0}\circ G_{1}\Rightarrow 0\circ G_1\Rightarrow 0$ and
compatible with $can$.

Consider the above sequence, there exists the relative kernel
$(Ker(F_1,\alpha_1),\\e_{(F_1,\alpha_1)},\varepsilon_{(F_1,\alpha_1)})$
in (2-SGp). For the symmetric 2-group $Ker(F_{1},\alpha_1)$, there
is an essentially surjective morphism $G_{2}:\cP_{2}\rightarrow
Ker(F_1,\alpha_1)$, with $\cP_{2}$ projective object in
(2-SGp)(\cite{14,20}). Let $F_2=e_{(F_1,\alpha_1)}\circ
G_2:\cP_2\rightarrow\cP_1$. Then we get a sequence
\begin{center}
\scalebox{0.9}[0.85]{\includegraphics{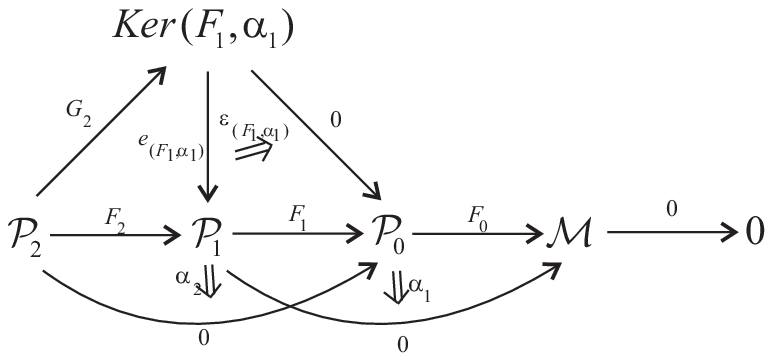}}
\end{center}
where $\alpha_2$ is the composition $F_{1}\circ F_{2}=F_{1}\circ
e_{(F_1,\alpha_1)}\circ G_{2}\Rightarrow 0\circ G_2\Rightarrow 0$
and compatible with $\alpha_1$.

Using the same method, we get a 2-chian complex of symmetric
2-groups
\begin{center}
\scalebox{0.9}[0.85]{\includegraphics{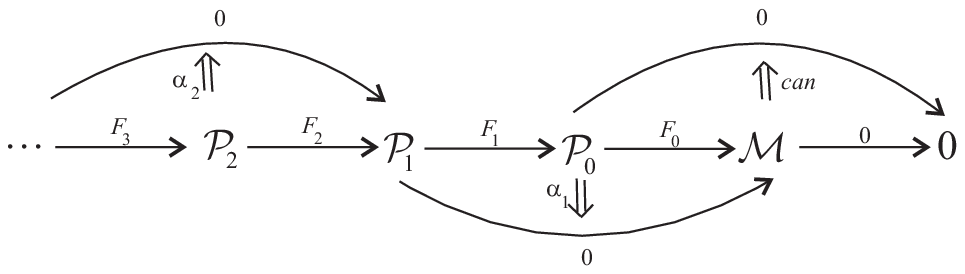}} {\footnotesize
Fig.2.}
\end{center}
Next, we will check that the complex Fig.2 is relative 2-exact in
each point.

Firstly, the complex Fig.2 is relative 2-exact in $\cM$. In fact,
$F_0$ is essentially surjective(\cite{11}).

Secondly, the complex Fig.2 is relative 2-exact in $\cP_0$. From the
cancellation property of $e_{F_0}$, there exists
$\overline{\alpha_2}:G_1\circ F_2\Rightarrow 0$ defined by
$\overline{(\alpha_{2})_{y}}\triangleq
(\alpha_{2})_{y}:G_{1}F_{2}(y)\rightarrow 0,\ \forall y\in
obj(\cP_{2})$. And $G_{1}:\cP_{1}\rightarrow KerF_0$ is in fact
$G_{1}(x)=(F_{1}(x),(\alpha_{1})_{x})$. For any $x_1,x_2\in
obj(\cP_1)$ and the morphism $g:G_{1}(x_1)\rightarrow G_{1}(x_2)$ of
$KerF_0$. Under the morphism $e_{F_0}:KerF_0\rightarrow \cP_0$, we
have a morphism $e_{F_0}(g):e_{F_0}G_{1}(x_1)=F_{1}(x_1)\rightarrow
e_{F_0}G_{1}(x_2)=F_{1}(x_2)$ of $\cP_0$, then we get a composition
morphism $e_{F_0}(g)+1:F_{1}(x_1+x_{2}^{*})\backsimeq
F_{1}(x_1)+F_{1}(x_2)^{*}\rightarrow
F_{1}(x_2)+F_{1}(x_2)^{*}\backsimeq 0$ and a commutative diagram
\begin{center}
\scalebox{0.9}[0.85]{\includegraphics{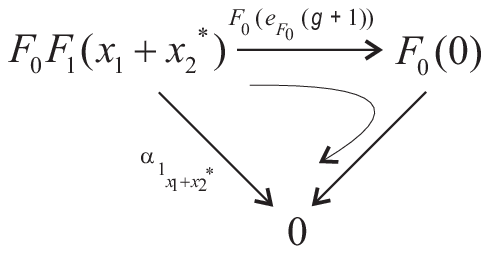}}
\end{center}
by the compatibility of $\varepsilon_{F_0}$ and $\alpha_1$.

We get an object $(x_{1}+x_{2}^{*},e_{F_0}(g)+1)$ in
$Ker(F_1,\alpha_1)$, and from the essentially surjective morphism
$G_2:\cP_2\rightarrow Ker(F_1,\alpha_1)$, there exist an object $y$
in $\cP_2$ and the isomorphism
$h:(x_1+x_{2}^{*},e_{F_0}(g)+1))\rightarrow G_{2}(y)$ in
$Ker(F_1,\alpha_1)$. Using the morphism
$e_{(F_1,\alpha_1)}:Ker(F_1,\alpha_1)\rightarrow \cP_1$, we have a
morphism $e_{(F_1,\alpha_1)}(h):x_1+x_{2}^{*}\rightarrow
e_{(F_1,\alpha_1)}G_{2}(y)=F_{2}(y)$. So we have a morphism
$$
f:x_1\rightarrow x+0\rightarrow x_1+(x_{2}^{*}+x_{2})\rightarrow
(x_1+x_{2}^{*})+x_{2}\rightarrow F_{2}(y)+x_2,
$$
such that
\begin{center}
\scalebox{0.9}[0.85]{\includegraphics{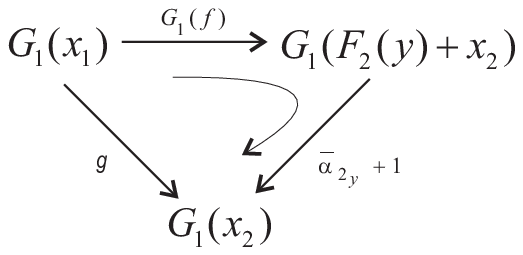}}
\end{center}
So we proved that the essentially surjective morphism $G_1$ is
$\overline{\alpha_2}$-full, the complex Fig.2 is relative 2-exact in
$\cP_0$.

Using the same method, we can prove the complex Fig.2 is relative
2-exact in each point.
\end{proof}

\begin{Thm}
Let $(F_{\cdot}:\cP_{\cdot}\rightarrow \cM,\alpha_{\cdot})$ be a
projective resolution of symmetric 2-group $\cM$, and
$H:\cM\rightarrow \cN$ a morphism in (2-SGp). Then for any
projective resolution $(G_{\cdot}:\cQ_{\cdot}\rightarrow
\cN,\beta_{\cdot})$, there is a morphism
$H_{\cdot}:\cP_{\cdot}\rightarrow \cQ_{\cdot}$ of complexes in
(2-SGp) together with the family of 2-morphisms
$\{\varepsilon_{n}:G_{n}\circ H_{n}\Rightarrow H_{n-1}\circ
F_{n}\}_{n\geq 0}$(where $H_{-1}=H$) as in the following diagram
\begin{center}
\scalebox{0.9}[0.85]{\includegraphics{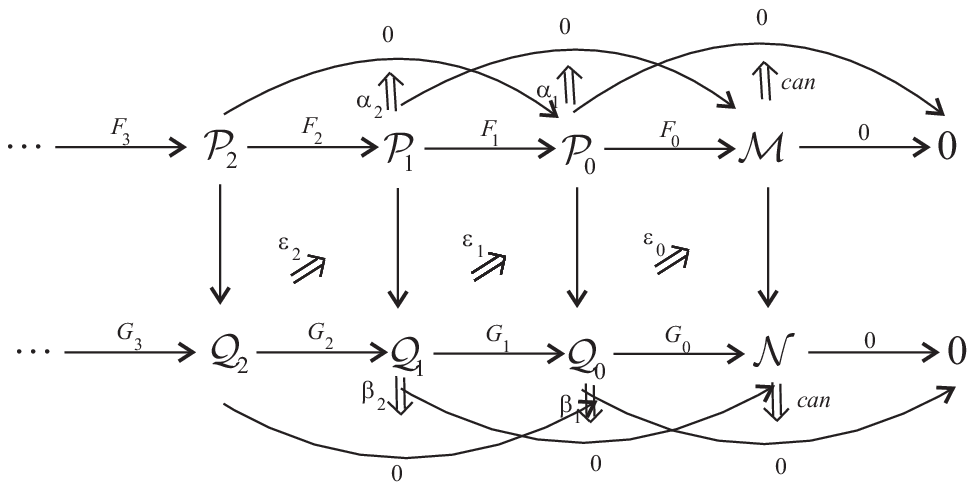}}
\end{center}
If there is another morphism between projective resolutions, they
are 2-chain homotopy.
\end{Thm}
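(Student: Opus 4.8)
The plan is to adapt the classical comparison argument for projective resolutions, constructing the chain map $H_\cdot$ and then the homotopy one degree at a time. At every stage the two inputs are the projectivity of $\cP_n$ and the relative 2-exactness of the target resolution $\cQ_\cdot$, which by Definition~1 means that the canonical comparison functor of $\cQ_\cdot$ into the relevant relative kernel is essentially surjective and $\overline{\beta}$-full; the accompanying $2$-morphisms are carried along throughout.

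\textbf{Construction of $(H_\cdot,\varepsilon_\cdot)$.} This is done by induction on $n\ge 0$. For $n=0$, the essential surjectivity of $G_0$ (part of $\cQ_\cdot$ being a resolution of $\cN$) together with the projectivity of $\cP_0$ lifts $H\circ F_0\colon\cP_0\to\cN$ to a $1$-morphism $H_0\colon\cP_0\to\cQ_0$ with a $2$-morphism $\varepsilon_0\colon G_0\circ H_0\Rightarrow H\circ F_0=H_{-1}\circ F_0$. Assume $H_0,\dots,H_{n-1}$ and $\varepsilon_0,\dots,\varepsilon_{n-1}$ have been built and satisfy the compatibility squares of a morphism of complexes in degrees $\le n-1$ (Section~2). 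Using $\varepsilon_{n-1}$ and the $2$-morphism $\alpha_n\colon F_{n-1}\circ F_n\Rightarrow 0$ one gets a canonical $2$-morphism $G_{n-1}\circ(H_{n-1}\circ F_n)\Rightarrow 0$, and a diagram chase with the compatibility square from step $n-1$, the naturality of $\varepsilon_{n-2}$, and the complex conditions on the $\alpha$'s and $\beta$'s (all of this being vacuous when $n=1$) shows that it exhibits $H_{n-1}\circ F_n$ as a $1$-morphism $\Phi_n\colon\cP_n\to Ker(G_{n-1},\beta_{n-1})$. By relative 2-exactness of $\cQ_\cdot$ at $\cQ_{n-1}$ the comparison functor $\Psi_n\colon\cQ_n\to Ker(G_{n-1},\beta_{n-1})$ is essentially surjective and full, so projectivity of $\cP_n$ lifts $\Phi_n$ through $\Psi_n$, giving $H_n\colon\cP_n\to\cQ_n$ and — after composing with $e_{(G_{n-1},\beta_{n-1})}$ and using $e_{(G_{n-1},\beta_{n-1})}\circ\Psi_n\backsimeq G_n$ — a $2$-morphism $\varepsilon_n\colon G_n\circ H_n\Rightarrow H_{n-1}\circ F_n$. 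The next compatibility square, relating $\varepsilon_{n-1},\varepsilon_n,\alpha_n,\beta_n$, is the image under a $1$-morphism of the defining square of $Ker(G_{n-1},\beta_{n-1})$ and so holds automatically, and the induction continues.

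\textbf{Uniqueness up to 2-chain homotopy.} Let $(H_\cdot,\varepsilon_\cdot)$ and $(H'_\cdot,\varepsilon'_\cdot)$ both lift $H$, so $H_{-1}=H'_{-1}=H$. I produce a 2-chain homotopy in the sense of Definition~2: $1$-morphisms $s_n\colon\cP_n\to\cQ_{n+1}$ for $n\ge 0$ (with $s_{-1}=0$) and $2$-morphisms $\tau_n\colon H_n\Rightarrow G_{n+1}\circ s_n+s_{n-1}\circ F_n+H'_n$, by induction on $n$. For $n=0$, composing $\varepsilon_0$ with $(\varepsilon'_0)^{-1}$ gives a $2$-morphism $G_0\circ\Theta_0\Rightarrow 0$ for the ``difference'' $1$-morphism $\Theta_0\colon\cP_0\to\cQ_0$, $A\mapsto H_0(A)+H'_0(A)^{*}$; thus $\Theta_0$ factors through $KerG_0$, and as the comparison functor $\cQ_1\to KerG_0$ is essentially surjective and full (relative 2-exactness at $\cQ_0$), projectivity of $\cP_0$ lifts it to $s_0\colon\cP_0\to\cQ_1$ with a $2$-morphism that rearranges into $\tau_0\colon H_0\Rightarrow G_1\circ s_0+H'_0$. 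For $n\ge 1$ one forms the appropriate ``difference'' $\Theta_n\colon\cP_n\to\cQ_n$ of $H_n$, $H'_n$ and $s_{n-1}\circ F_n$ and, combining $\varepsilon_n$, $\varepsilon'_n$, $\tau_{n-1}$ composed with $F_n$, and $\alpha_n$ (which annihilates the $s_{n-2}\circ F_{n-1}\circ F_n$ term coming from $\tau_{n-1}$), obtains a $2$-morphism $G_n\circ\Theta_n\Rightarrow 0$ making $\Theta_n$ factor through $Ker(G_n,\beta_n)$; lifting through the essentially surjective and full comparison functor $\cQ_{n+1}\to Ker(G_n,\beta_n)$ (projectivity of $\cP_n$, relative 2-exactness at $\cQ_n$) gives $s_n$, and its companion $2$-morphism gives $\tau_n$. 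The compatibility condition of Definition~2 for $(s_\cdot,\tau_\cdot)$ is once more the image of a relative-kernel square under a $1$-morphism together with the naturality of the $2$-morphisms involved.

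\textbf{Expected main obstacle.} The conceptual skeleton is the familiar one; the real effort lies in the $2$-categorical bookkeeping. The recurring technical point in both halves is the verification that the pair assembled from $H_{n-1}\circ F_n$ (respectively from $\Theta_n$) really defines an object of $Ker(G_{n-1},\beta_{n-1})$ (respectively $Ker(G_n,\beta_n)$) — that is, satisfies the defining compatibility with the $\beta$'s — and, dually, that the liftings produced are themselves morphisms of complexes (respectively satisfy Definition~2); each of these reduces to transporting the universal square of the appropriate relative kernel along a $1$-morphism and to naturality, but these chases are where the $2$-dimensional argument genuinely departs from the $1$-dimensional one. A second point requiring care is that the projectivity of $\cP_n$ must be used in its $2$-dimensional form, namely that $\Hom(\cP_n,-)$ sends an essentially surjective and full morphism of symmetric $2$-groups — here the comparison functor into the relative kernel furnished by the relative 2-exactness of $\cQ_\cdot$ — to an essentially surjective morphism, so that the required lift of the relevant $1$-morphism, together with its $2$-morphism, actually exists.
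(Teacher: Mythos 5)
Your proposal follows essentially the same route as the paper's proof: construct $H_n$ and $\varepsilon_n$ inductively by factoring $H_{n-1}\circ F_n$ through the relative kernel $Ker(G_{n-1},\beta_{n-1})$ (which is exactly the paper's composite $\overline{H_{n-1}}\circ\overline{F_n}$) and lifting through the essentially surjective comparison functor $\cQ_n\rightarrow Ker(G_{n-1},\beta_{n-1})$ via projectivity of $\cP_n$, and then build the homotopy by factoring the difference $H_n-K_n-T_{n-1}\circ F_n$ through $Ker(G_n,\beta_n)$ and lifting through $\cQ_{n+1}\rightarrow Ker(G_n,\beta_n)$. The argument and the points you flag as requiring care match the paper's treatment.
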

\begin{proof}
The existence of $H_{0}:\cP_{0}\rightarrow \cQ_{0}$: Since $G_{0}$
is essentially surjective and $\cP_0$ is a projective object in
(2-SGp), there exist 1-morphism $H_0:\cP_{0}\rightarrow \cQ_{0}$ and
2-morphism $\varepsilon_{0}:G_{0}\circ H_{0}\Rightarrow H\circ
F_{0}$ as follows
\begin{center}
\scalebox{0.9}[0.85]{\includegraphics{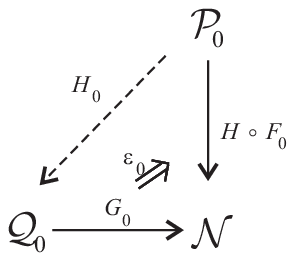}}
\end{center}

Consider the morphism $H_0:\cP_{0}\rightarrow \cQ_{0}$, we have a
morphism
\begin{align*}
&\hspace{1.1cm}\overline{H_{0}}:KerF_{0}\rightarrow KerG_{0}\\
&\hspace{1.8cm}(x_0,a_0)\mapsto (H_{0}(x_0),\widetilde{a_0}),\\
&(x_0,a_0)\xrightarrow[]{f_0}(x_{0}^{'},a_{0}^{'})\mapsto
(H_0(x_0),\widetilde{a_{0}})\xrightarrow[]{H_{0}(f_0)}(H_0(x_{0}^{'}),\widetilde{a_{0}}^{'})
\end{align*}
where $\widetilde{a_0}$ is the composition $(G_{0}\circ
H_{0})(x_0)\xrightarrow[]{(\varepsilon_{0})_{x_0}}(H\circ
F_{0})(x_0)\xrightarrow[]{H(a_0)} H(0)\backsimeq 0$. Moreover, there
is a commutative diagram
\begin{center}
\scalebox{0.9}[0.85]{\includegraphics{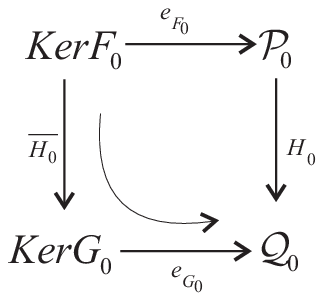}}
\end{center}
From the relative 2-exactness of projective resolution of symmetric
2-group,
there exist essentially surjective morphisms
$\overline{F_{1}}:\cP_{1}\rightarrow kerF_{0}$,
$\overline{G_{1}}:\cQ_{1}\rightarrow kerG_{0}$ and 2-morphisms
$\varphi_{1}:e_{F_0}\circ \overline{F_1}\Rightarrow F_1$,
$\psi_{1}:e_{G_0}\circ \overline{G_{1}}\Rightarrow G_{1}$,
respectively. Then there exist 1-morphism $H_1:\cP_1\rightarrow
\cQ_1$ and 2-morphism
$\overline{\varepsilon_{1}}:\overline{G_1}\circ H_1\Rightarrow
\overline{H_0}\circ\overline{F_1}$.
From $\overline{\varepsilon_{1}}$ and $e_{G_0}\circ
\overline{H_0}=H_0\circ e_{F_0}$, we can define a 2-morphism
$\varepsilon_1:G_1\circ H_1\Rightarrow H_0\circ F_1$ by
$(\varepsilon_{1})_{x_1}: (G_1\circ
H_1)(x_1)\xrightarrow[]{(\psi_{1})_{H_{1}(x_{1})}^{-1}}e_{G_0}\circ
\overline{G_1}\circ
H_{1}(x_1)\xrightarrow[]{e_{G_0}(\overline{(\varepsilon_{1})}_{x_1})}e_{G_0}\circ\overline{H_{0}}\circ\overline{F_1}(x_1)=H_0\circ
e_{F_0}\circ
\overline{F_1}(x_1)\xrightarrow[]{H_{0}((\varphi_{1})_{x_{1}})}H_{0}\circ
F_{1}(x_1)$, which is compatible with $\varepsilon_0 $.

Next we will construct $H_{n}$ and $\varepsilon_{n}:G_{n}\circ
H_{n}\Rightarrow H_{n-1}\circ F_{n}$ by induction on $n$.
Inductively, suppose $H_{i}$ and $\varepsilon_{i}$ have been
constructed for $i\leq n$ satisfying the compatible conditions.
Consider the morphism $H_{n-1}:\cP_{n-1}\rightarrow\cQ_{n-1}$, there
is an induced morphism
\begin{align*}
&\overline{H_{n-1}}:Ker(F_{n-1},\alpha_{n-1})\rightarrow
Ker(G_{n-1},\beta_{n-1})\\
&\hspace{1.8cm}(x_{n-1},a_{n-1})\mapsto(H_{n-1}(x_{n-1}),\widetilde{a_{n-1}}),\\
&\hspace{3.2cm}f_{n-1}\mapsto H_{n-1}(f_{n-1})
\end{align*}
where $\widetilde{a_{n-1}}$ is the composition $G_{n-1}\circ
H_{n-1}(x_{n-1})\xrightarrow[]{}H_{n-2}\circ
F_{n-1}(x_{n-1})\xrightarrow[]{H_{n-2}(a_{n-1})}H_{n-1}(0)\simeq 0$.
Moreover, there is the following commutative diagram
\begin{center}
\scalebox{0.9}[0.85]{\includegraphics{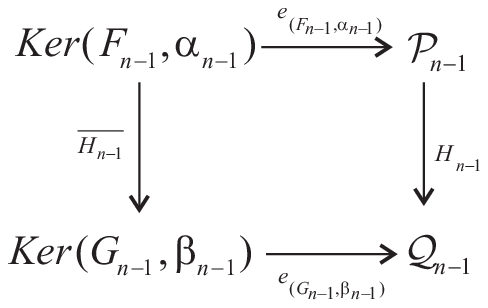}}
\end{center}

Using the relative 2-exactness of projective resolutions of $\cM$
and $\cN$, we have the following diagram
\begin{center}
\scalebox{0.9}[0.85]{\includegraphics{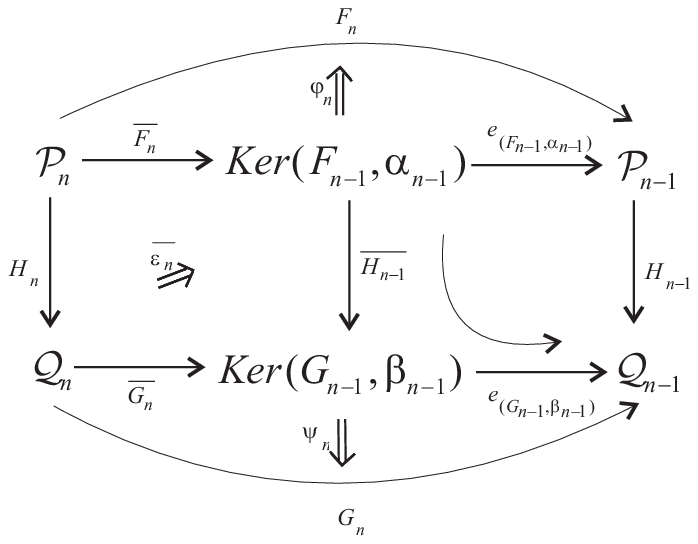}}
\end{center}
The existence of $H_{n}$ and $\overline{\varepsilon_{n}}$ come from
the projectivity of $\cP_{n}$. Similar to the appearing of
$\varepsilon_{1}$, there is a 2-morphism $\varepsilon_{n}$ given by
$\overline{\varepsilon_{n}}$, compatible with $\varepsilon_{n-1}$.


Next, we show the uniqueness of $(H_{\cdot},\varepsilon_{\cdot})$ up
to 2-chain homotopy. Suppose $(K_{\cdot},\zeta_{\cdot})$ is another
morphism of projective resolutions. We will construct the 1-morphism
$T_{n}:\cP_n\rightarrow \cQ_{n+1},$ and 2-morphism $
\tau_{n}:H_n\Rightarrow G_{n+1}\circ T_{n}+T_{n-1}\circ F_n+K_n$ by
induction on $n$. If $n<0$, $\cP_{n}=0$, so we get $T_n=0$. If
$n=0$, there is a 1-morphism $H_0-K_0:\cP_0\rightarrow KerG_0$,
together with essentially surjective morphism
$\overline{G_1}:\cQ_1\rightarrow KerG_0$, there exist a morphism
$T_0:\cP_0\rightarrow \cQ_1$ and 2-morphism
$\tau_{0}^{'}:\overline{G_1}\circ T_0\Rightarrow H_0-K_0$. Then we
get a 2-morphism $\tau_{0}:H_0\Rightarrow G_1\circ T_0+K_0$.

Inductively, we suppose given family of morphisms
$(H_i,\tau_i)_{i\leq n}$ so that
$H_i:\cP_i\rightarrow\cQ_{i+1},\tau_{i}:H_i\Rightarrow G_{i+1}\circ
T_{i}+T_{i-1}\circ F_i+K_{i}.$ Consider the 1-morphism
$H_n-K_n-T_{n-1}\circ F_n:\cP_n\rightarrow Ker(G_n,\beta_n)$ and
essentially surjective morphism
$\overline{G_{n+1}}:\cQ_{n+1}\rightarrow Ker(G_n,\beta_n)$, there
exist a 1-morphism $T_n:\cP_n\rightarrow\cQ_{n+1}$ and a 2-morphism
$\tau_{n}^{'}:\overline{G_{n+1}}\circ T_n\Rightarrow
H_n-K_n-T_{n-1}\circ F_n$. Then we get a 2-morphism
$\tau:H_n\Rightarrow G_{n+1}\circ T_n+T_{n-1}\circ F_n+K_n$.
\end{proof}

\section{Derived 2-Functor}
In this section, we will give the left derived 2-functor in the
abelian 2-category (2-SGp), which has enough projective
objects\cite{14,20}.

\begin{Def}
An additive 2-functor(\cite{2}) $T$: (2-SGp)$\rightarrow$(2-SGp) is
called right relative 2-exact if the relative 2-exactness of
\begin{center}
\scalebox{0.9}[0.85]{\includegraphics{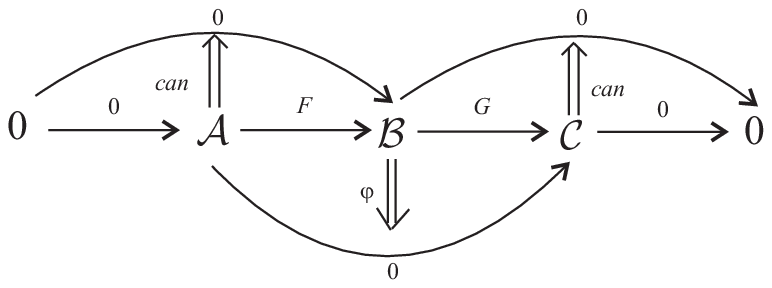}}
\end{center}
in $\cA,\cB$ and $\cC$ implies relative 2-exactness of
\begin{center}
\scalebox{0.9}[0.85]{\includegraphics{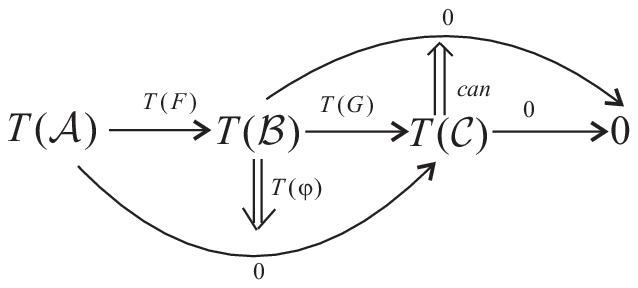}}
\end{center}
in $T(\cB)$ and $T(\cC)$.
\end{Def}
The left relative 2-exact 2-functor can be defined dually.

By Remark 2 and Proposition 1, Theorem 1, there is
\begin{cor}
Let $T: $(2-SGp)$\rightarrow$(2-SGp) be an additive 2-functor,
and $\cA$ be any object of (2-SGp). For two projective resolutions
$\cP_{\cdot},\ \cQ_{\cdot}$ of $\cA$, there is an equivalence
between homology symmetric 2-groups $\cH_{\cdot}(T(\cP_{\cdot}))$
and $\cH_{\cdot}(T(\cQ_{\cdot}))$.
\end{cor}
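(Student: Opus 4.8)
The plan is to chain together the three big results that precede this corollary. Given two projective resolutions $\cP_{\cdot}$ and $\cQ_{\cdot}$ of the same object $\cA$, apply Theorem 1 with $\cM=\cN=\cA$ and $H=\mathrm{id}_{\cA}$: this produces a morphism of complexes $H_{\cdot}:\cP_{\cdot}\rightarrow\cQ_{\cdot}$ (with the compatible family $\varepsilon_{\cdot}$), and symmetrically a morphism $K_{\cdot}:\cQ_{\cdot}\rightarrow\cP_{\cdot}$ lifting $\mathrm{id}_{\cA}$. The uniqueness clause of Theorem 1 then gives that $K_{\cdot}\circ H_{\cdot}$ and $\mathrm{id}_{\cP_{\cdot}}$ are 2-chain homotopic (both lift $\mathrm{id}_{\cA}$ as morphisms of the projective resolution $\cP_{\cdot}$ to itself), and likewise $H_{\cdot}\circ K_{\cdot}\simeq \mathrm{id}_{\cQ_{\cdot}}$ up to 2-chain homotopy.

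Next I would push everything through the additive 2-functor $T$. Applying $T$ to $H_{\cdot}$ and $K_{\cdot}$ gives morphisms of complexes $T(H_{\cdot}):T(\cP_{\cdot})\rightarrow T(\cQ_{\cdot})$ and $T(K_{\cdot}):T(\cQ_{\cdot})\rightarrow T(\cP_{\cdot})$ in (2-SGp); here I need that an additive 2-functor sends a 2-chain complex to a 2-chain complex and a morphism of complexes to a morphism of complexes, which is routine from the definition of 2-functor (it preserves composition and the structural 2-morphisms, hence the commuting coherence diagram for $\alpha_{\cdot}$). By the Lemma at the end of Section 2, since $K_{\cdot}\circ H_{\cdot}$ is 2-chain homotopic to $\mathrm{id}_{\cP_{\cdot}}$, the images $T(K_{\cdot})\circ T(H_{\cdot}) = T(K_{\cdot}\circ H_{\cdot})$ and $T(\mathrm{id}_{\cP_{\cdot}})=\mathrm{id}_{T(\cP_{\cdot})}$ are 2-chain homotopic in (2-SGp); symmetrically $T(H_{\cdot})\circ T(K_{\cdot})\simeq \mathrm{id}_{T(\cQ_{\cdot})}$.

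Finally I would apply Remark 2(2) and Proposition 1 to pass to homology. Remark 2(2) gives $\cH_{n}(T(K_{\cdot})\circ T(H_{\cdot}))\backsimeq \cH_{n}(T(K_{\cdot}))\circ \cH_{n}(T(H_{\cdot}))$, and Proposition 1 applied to the 2-chain homotopy from the previous paragraph gives $\cH_{n}(T(K_{\cdot})\circ T(H_{\cdot}))\backsimeq \cH_{n}(\mathrm{id}_{T(\cP_{\cdot})})\backsimeq \mathrm{id}_{\cH_{n}(T(\cP_{\cdot}))}$; combining, $\cH_{n}(T(K_{\cdot}))\circ \cH_{n}(T(H_{\cdot}))\backsimeq \mathrm{id}$, and symmetrically on the other side. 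Hence $\cH_{n}(T(H_{\cdot})):\cH_{n}(T(\cP_{\cdot}))\rightarrow \cH_{n}(T(\cQ_{\cdot}))$ is an equivalence for every $n$, which is the asserted statement.

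The steps are individually short, so the ``main obstacle'' is really one of bookkeeping rather than of a deep idea: one must be careful that the equivalences supplied by Proposition 1 and Remark 2 are coherent enough to be composed, i.e. that ``$\backsimeq$'' behaves transitively and is compatible with horizontal composition of the induced morphisms on homology 2-groups. In a 2-categorical setting this is exactly the kind of coherence that needs a line of justification, so I would spend the bulk of the write-up making explicit that the 2-morphisms witnessing $\cH_n(T(K_\cdot))\circ\cH_n(T(H_\cdot))\backsimeq\mathrm{id}$ and $\cH_n(T(H_\cdot))\circ\cH_n(T(K_\cdot))\backsimeq\mathrm{id}$ together exhibit $\cH_n(T(H_\cdot))$ as an equivalence in the 2-category (2-SGp); everything upstream is a direct invocation of the quoted results.
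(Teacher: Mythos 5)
Your proposal is correct and follows essentially the same route as the paper, which derives this corollary directly from Theorem 1 (comparison morphisms and uniqueness up to 2-chain homotopy), Lemma 1 (an additive 2-functor preserves 2-chain homotopy), Proposition 1 (2-chain homotopic morphisms induce equivalent morphisms on homology), and Remark 2 (functoriality of $\cH_n$ under composition). The paper states the corollary with only a citation of these results, so your write-up simply makes explicit the standard argument the authors intend.
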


Let $T$: (2-SGp)$\rightarrow$(2-SGp) be an additive
2-functor. There is a 2-functor
\begin{align*}
&\cL_{i}T:\textrm{(2-SGp)}\rightarrow \textrm{(2-SGp)}\\
&\hspace{2.2cm}\cA\mapsto \cL_{i}T(\cA),\\
&\hspace{1.3cm}\cA\xrightarrow[]{F}\cB\mapsto
\cL_{i}T(\cA)\xrightarrow[]{\cL_{i}T(F)} \cL_{i}T\cB),
\end{align*}
where $\cL_{i}T(\cA)$ is defined by $\cH_{i}(T(\cP_{\cdot}))$, and
$\cP_{\cdot}$ is the projective resolution  of $\cA$. $\cL_{i}T$ is
a well-defined 2-functor from the properties of additive 2-functor
and Corollary 1.
\begin{cor}
Let $T$: (2-SGp)$\rightarrow$(2-SGp) be a right relative 2-exact
2-functor, and $\cA$ be a projective object in (2-SGp). Then
$\cL_{i}T(\cA)=0$ for $i\neq0$.
\end{cor}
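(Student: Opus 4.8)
The plan is to build an explicit projective resolution of the projective object $\cA$ and read off its derived functors directly. Since $\cA$ is projective in (2-SGp), the trivial complex
\begin{center}
$\cdots\xrightarrow[]{0}0\xrightarrow[]{0}0\xrightarrow[]{0}\cA\xrightarrow[]{\mathrm{id}}\cA$
\end{center}
with $\cP_{0}=\cA$, $F_{0}=\mathrm{id}_{\cA}$, and all higher $\cP_{n}=0$, is a projective resolution of $\cA$: by Definition~6 I only need to check relative $2$-exactness at each point, and at $\cA$ this is the essential surjectivity of $\mathrm{id}_{\cA}$, while at $\cP_{0}=\cA$ it amounts to $Ker(\mathrm{id}_{\cA},can)$ being the zero symmetric $2$-group, so the map $0\to Ker(\mathrm{id}_{\cA})$ is (vacuously) essentially surjective and full; at every $\cP_{n}$ with $n\ge 1$ the relevant relative kernel is again zero and $0\to 0$ is essentially surjective and full. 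First I would record this computation carefully, since the whole corollary rests on it.

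Next I would apply $T$. Because $T$ is additive (Definition~7 refers to \cite{2}), $T$ sends the zero symmetric $2$-group to the zero symmetric $2$-group and sends the zero morphisms to zero morphisms, so $T(\cP_{\cdot})$ is the complex $\cdots\to 0\to 0\to T(\cA)$ concentrated in degree $0$. By the explicit description of homology symmetric $2$-groups given in Section~2 (the $n$th homology being the relative cokernel $Coker(\overline{\alpha_{n+2}},L_{n+1}^{'})$), one computes $\cH_{0}(T(\cP_{\cdot}))\backsimeq T(\cA)$ and $\cH_{i}(T(\cP_{\cdot}))\backsimeq 0$ for $i\neq 0$, exactly as in the $1$-dimensional case. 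The key point enabling this is that the relative kernels and cokernels appearing in the definition of $\cH_{i}$, $i\ne 0$, all collapse because the surrounding objects are zero.

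Finally I would invoke Corollary~1 (equivalently the well-definedness of $\cL_{i}T$ established via Remark~2, Proposition~1 and Theorem~1): since $\cL_{i}T(\cA)$ is defined as $\cH_{i}(T(\cP_{\cdot}))$ for \emph{some} projective resolution $\cP_{\cdot}$ of $\cA$, and the homology symmetric $2$-groups of $T$ applied to any two projective resolutions of $\cA$ are equivalent, I may compute $\cL_{i}T(\cA)$ using the trivial resolution above. Hence $\cL_{i}T(\cA)=0$ for $i\neq 0$. Notice that right relative $2$-exactness of $T$ is not actually needed for this statement beyond additivity — the hypothesis is stated for uniformity with the rest of the section, and I would remark on that. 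The main obstacle is the first step: verifying that the trivial complex genuinely satisfies the relative $2$-exactness condition of Definition~6 at $\cP_{0}$, which requires unwinding the definition of relative $2$-exact (Definition~1) in the degenerate case where the relative kernel is the zero symmetric $2$-group and confirming that the induced functor $F'$ is essentially surjective and $\overline{\alpha}$-full; this is routine but must be done honestly since it is where projectivity of $\cA$ is nowhere used — instead it is the triviality of $Ker(\mathrm{id}_{\cA})$ that does the work, and projectivity of $\cA$ only guarantees $\cP_{0}=\cA$ is an allowed choice in Definition~6.
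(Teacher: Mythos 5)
The paper states this corollary without proof, so there is no argument of its own to compare against; your proof is the standard one and it is correct. The two load-bearing points are exactly the ones you isolate: (i) because $\cA$ is projective, the complex $\cdots\to 0\to 0\to \cA\xrightarrow{\mathrm{id}}\cA$ qualifies as a projective resolution, with relative $2$-exactness at $\cP_0=\cA$ reducing to the fact that $Ker(\mathrm{id}_{\cA})$ is contractible --- every object $(A,a\colon A\to 0)$ carries its own isomorphism to $0$ and between any two such objects there is a unique morphism --- so the functor from the zero $2$-group into it is essentially surjective and full (I would say ``trivially'' rather than ``vacuously''); and (ii) an additive $2$-functor sends the zero symmetric $2$-group to one equivalent to zero, so every relative kernel and relative cokernel entering $\cH_i(T(\cP_{\cdot}))$ for $i\neq 0$ collapses. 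Combined with Corollary 1, which makes $\cL_iT(\cA)$ independent of the chosen projective resolution, this yields the claim. Your observation that right relative $2$-exactness of $T$ is never used, only additivity, is also correct and mirrors the one-dimensional situation, where that hypothesis serves only to identify $\cL_0T$ with $T$. One bookkeeping remark: the definitions you cite as Definition~6 and Definition~7 are Definition~3 (projective resolution) and Definition~4 (where additivity enters) in this paper's numbering.
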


The following is a basic property of derived functors.
\begin{Thm}
The left derived 2-functor $\cL_{*}T$ takes
the sequence of symmetric 2-groups
\begin{center}
\scalebox{0.9}[0.85]{\includegraphics{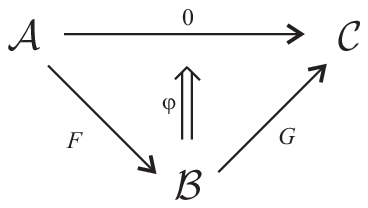}}
\end{center}
which is relative 2-exact in $\cA,\ \cB,\ \cC$ to a long sequence
2-exact(\cite{1,6})in each point
\begin{center}
\scalebox{0.9}[0.85]{\includegraphics{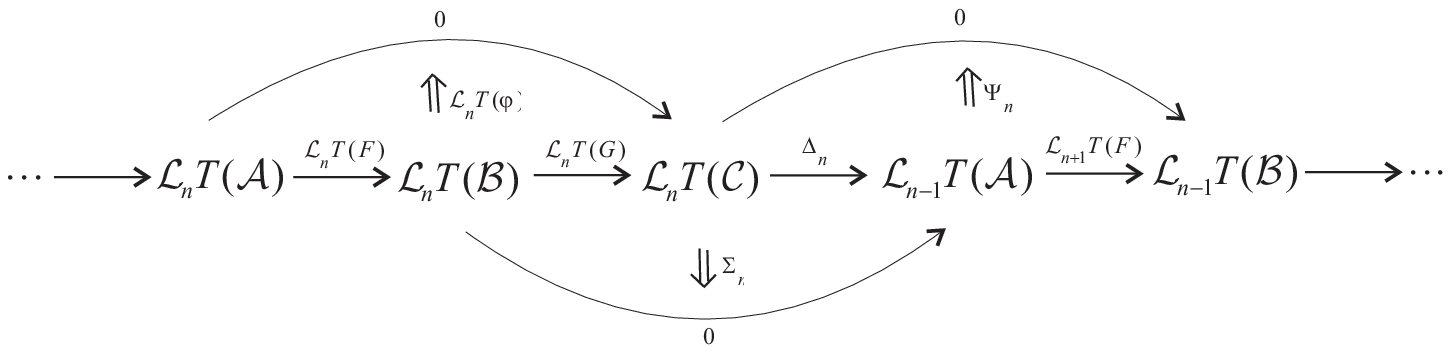}}
\end{center}

\end{Thm}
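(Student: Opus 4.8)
The plan is to imitate the classical construction of the long exact sequence of a derived functor, transported into the 2-categorical setting of (2-SGp). Start with the given sequence $(F,\varphi,G):\cA\to\cB\to\cC$ which is relative 2-exact in $\cA,\cB,\cC$, and choose projective resolutions $\cP_\cdot\to\cA$ and $\cR_\cdot\to\cC$ by Proposition 2. The first key step is a horseshoe-type lemma: build a projective resolution $\cQ_\cdot\to\cB$ together with morphisms of complexes $\cP_\cdot\to\cQ_\cdot\to\cR_\cdot$ lifting $F$ and $G$, such that in each degree $\cQ_n\simeq\cP_n+\cR_n$ (the 2-categorical biproduct) and the sequence $\cP_n\to\cQ_n\to\cR_n$ is relative 2-exact in $\cQ_n$, with the connecting 2-morphisms compatible. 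This uses essential surjectivity of the augmentations together with projectivity of $\cR_n$ to split off the lifted differential, exactly as in the 1-dimensional horseshoe lemma, but every equality must be weakened to a coherent 2-morphism and the pentagon/compatibility diagrams checked.

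The second step is to apply the additive 2-functor $T$. Since $T$ is additive, $T$ preserves the biproduct, so $T(\cQ_n)\simeq T(\cP_n)+T(\cR_n)$ and the sequence $T(\cP_\cdot)\to T(\cQ_\cdot)\to T(\cR_\cdot)$ is, in each degree, a relative 2-exact sequence of 2-chain complexes that is degreewise ``split'' up to coherent 2-morphism. Thus we obtain a short 2-exact sequence of complexes. Now invoke the long 2-exact sequence machinery of del R\'io--Mart\'inez-Moreno--Vitale \cite{11}: a short relative-2-exact sequence of complexes of symmetric 2-groups induces a long sequence, 2-exact in each point, relating the homology symmetric 2-groups. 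Since $\cH_i(T(\cP_\cdot))=\cL_iT(\cA)$, $\cH_i(T(\cQ_\cdot))=\cL_iT(\cB)$ (because $\cQ_\cdot$ is a projective resolution of $\cB$ and $\cL_iT$ is well-defined by Corollary 1), and $\cH_i(T(\cR_\cdot))=\cL_iT(\cC)$, this long sequence is precisely the one in the statement, with the connecting morphisms $\cL_iT(\cC)\to\cL_{i-1}T(\cA)$ supplied by \cite{11}.

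Finally I would check naturality and well-definedness: the resulting long sequence should not depend, up to equivalence, on the choice of resolutions, which follows from Theorem 1 (uniqueness of lifts up to 2-chain homotopy), Proposition 1 (2-chain homotopic morphisms induce equivalent maps on homology), and Lemma 1 (additive $T$ preserves 2-chain homotopy). One also records that $\cL_0T(\cC)$ sits at the right end and that beyond the given three terms the sequence is completed on the left by the augmentation 2-morphisms, giving the displayed long sequence. I expect the main obstacle to be the horseshoe lemma in step one: producing the resolution $\cQ_\cdot$ of $\cB$ together with all the coherence 2-morphisms making $\cP_\cdot\to\cQ_\cdot\to\cR_\cdot$ a genuine degreewise-split short exact sequence of 2-chain complexes, and verifying the compatibility (pentagon) conditions for the differentials $M_n:\cQ_n\to\cQ_{n-1}$ built as ``upper triangular'' matrices with an off-diagonal 2-morphism term; everything after that is an application of results already in the literature \cite{11} and in the present paper.
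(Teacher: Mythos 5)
Your proposal takes essentially the same route as the paper: a horseshoe-type lemma producing a degreewise-biproduct projective resolution of $\cB$ that fits into an extension of complexes $\cP_{\cdot}\rightarrow\cP_{\cdot}\times\cQ_{\cdot}\rightarrow\cQ_{\cdot}$ (the paper's Lemma 3, resting on Lemma 2 that products of projectives are projective), then additivity of $T$ to preserve the biproduct (Lemma 4), and finally the long 2-exact sequence of \cite{11} applied to the resulting extension of complexes. The only difference is that you make explicit the independence-of-choices check via Theorem 1, Proposition 1 and Lemma 1, which the paper leaves implicit in the well-definedness of $\cL_{i}T$.
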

In order to prove this theorem, we need the following Lemmas.
\begin{Lem}
Let $\cP$ and $\cQ$ be projective objects in (2-SGp). Then the
product category $\cP\times\cQ$ is a projective object in (2-SGp).
\end{Lem}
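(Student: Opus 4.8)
The plan is to verify the lifting property that characterizes projective objects in (2-SGp) directly on the product category $\cP\times\cQ$, using the fact that $\cP$ and $\cQ$ each enjoy it. Recall that an object $\cR$ is projective in (2-SGp) exactly when, for every essentially surjective morphism $p:\cX\to\cY$ and every morphism $f:\cR\to\cY$, there exist a morphism $g:\cR\to\cX$ and a $2$-morphism $\eta:p\circ g\Rightarrow f$ (the ``up to equivalence'' lifting). First I would recall the standard fact that $\cP\times\cQ$, with the componentwise symmetric monoidal structure, is again a symmetric $2$-group, and that the two projections $\mathrm{pr}_{\cP}:\cP\times\cQ\to\cP$ and $\mathrm{pr}_{\cQ}:\cP\times\cQ\to\cQ$ are additive $2$-functors while the pair $(\mathrm{pr}_{\cP},\mathrm{pr}_{\cQ})$ is jointly ``conservative'' in the sense that a morphism out of $\cP\times\cQ$ is determined up to equivalence by its two components. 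Equivalently, $\cP\times\cQ$ is the $2$-categorical product, so giving a morphism $\cR\to\cP\times\cQ$ is the same (up to canonical equivalence) as giving a pair of morphisms $\cR\to\cP$ and $\cR\to\cQ$ together with the obvious coherence.

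Next, given an essentially surjective $p:\cX\to\cY$ and a morphism $f=(f_1,f_2):\cP\times\cQ\to\cY$, I would pull back along each projection: form the morphisms $f\circ i_{\cP}:\cP\to\cY$ and $f\circ i_{\cQ}:\cQ\to\cY$ — here, more carefully, I use that $\cP\times\cQ$ receives the two ``inclusion'' morphisms $i_{\cP}=(\mathrm{id},0)$ and $i_{\cQ}=(0,\mathrm{id})$, and that the identity of $\cP\times\cQ$ is equivalent to $i_{\cP}\circ\mathrm{pr}_{\cP}+i_{\cQ}\circ\mathrm{pr}_{\cQ}$ via a canonical $2$-morphism coming from the symmetric monoidal structure. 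Since $\cP$ is projective, the composite $\cP\xrightarrow{i_{\cP}}\cP\times\cQ\xrightarrow{f}\cY$ lifts along $p$ to some $g_1:\cP\to\cX$ with a $2$-morphism $\eta_1:p\circ g_1\Rightarrow f\circ i_{\cP}$; symmetrically, projectivity of $\cQ$ gives $g_2:\cQ\to\cX$ and $\eta_2:p\circ g_2\Rightarrow f\circ i_{\cQ}$. Then I would assemble $g:\cP\times\cQ\to\cX$ as the ``sum'' $g\triangleq g_1\circ\mathrm{pr}_{\cP}+g_2\circ\mathrm{pr}_{\cQ}$, which makes sense because $\cX$ is a symmetric $2$-group (its addition is an additive $2$-functor $\cX\times\cX\to\cX$).

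Finally I would produce the required $2$-morphism $\eta:p\circ g\Rightarrow f$. Using that $p$ is additive, $p\circ g\simeq (p\circ g_1)\circ\mathrm{pr}_{\cP}+(p\circ g_2)\circ\mathrm{pr}_{\cQ}$, and then $\eta_1\star\mathrm{pr}_{\cP}$ and $\eta_2\star\mathrm{pr}_{\cQ}$ together with the additivity $2$-isomorphisms of $p$ give a $2$-morphism to $(f\circ i_{\cP})\circ\mathrm{pr}_{\cP}+(f\circ i_{\cQ})\circ\mathrm{pr}_{\cQ}\simeq f\circ(i_{\cP}\circ\mathrm{pr}_{\cP}+i_{\cQ}\circ\mathrm{pr}_{\cQ})\simeq f\circ\mathrm{id}_{\cP\times\cQ}=f$, the last equivalences being the canonical decomposition of the identity of the product. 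Pasting these $2$-morphisms yields $\eta$. The main obstacle I anticipate is bookkeeping the coherence: one must check that the various canonical $2$-morphisms (additivity constraints of $p$, the decomposition $\mathrm{id}_{\cP\times\cQ}\simeq i_{\cP}\mathrm{pr}_{\cP}+i_{\cQ}\mathrm{pr}_{\cQ}$, and the symmetry/associativity constraints of $\cX$ and $\cY$) paste together compatibly, i.e. that the diagram defining $\eta$ actually commutes; this is a diagram chase in the symmetric monoidal $2$-category (2-SGp), routine but requiring care, and it is exactly the kind of coherence verification that the earlier sections of the paper have set up the tools for.
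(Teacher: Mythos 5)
Your proposal is correct and follows essentially the same route as the paper: decompose a morphism $G:\cP\times\cQ\to\cB$ into its two components via the inclusions $i_1,i_2$, lift each component separately using the projectivity of $\cP$ and $\cQ$, and reassemble the lift as $G_1'\circ p_1+G_2'\circ p_2$ with the comparison $2$-morphism built from the monoidal constraints and the decomposition $G(x,y)\backsimeq G(x,0)+G(0,y)$. The paper writes the final $2$-morphism out pointwise on objects $(x,y)$ rather than as an abstract pasting, but the content is identical.
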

\begin{proof}
First we know that $\cP\times\cQ$ is a symmetric
2-group(\cite{2,4}). So we need to prove the projectivity of it.
There are canonical morphisms
$$
\cP\xleftarrow[]{p_1}\cP\times\cQ\xrightarrow[]{p_2}\cQ,\
\cP\xrightarrow[]{i_1}\cP\times\cQ\xleftarrow[]{i_2}\cQ.
$$
For any morphism $G:\cP\times\cQ\rightarrow \cB$, there are
composition morphisms
$G_1:\cP\xrightarrow[]{i_1}\cP\times\cQ\xrightarrow[]{G}\cB$,
$G_2:\cQ\xrightarrow[]{i_2}\cP\times\cQ\xrightarrow[]{G}\cB$. Then
for an essentially surjective functor $F:\cA\rightarrow \cB$, there
exist 1-morphisms $G_{1}^{'}:\cP\rightarrow \cA$,
$G_{2}^{'}:\cQ\rightarrow \cA$ and 2-morphisms $h_{1}:F\circ
G_{1}^{'}\Rightarrow G_{1}$, $h_{2}:F\circ G_{2}^{'}\Rightarrow
G_{2}$ since $\cP$ and $\cQ$ are projective objects in (2-SGp).

So there are 1-morphism $G^{'}:\cP\times\cQ\rightarrow\cA$ given by
$G^{'}\triangleq G_{1}^{'}\circ p_{1}+G_{2}^{'}\circ p_{2}$ and
2-morphism $h:F\circ G^{'}\Rightarrow G:\cP\times\cQ\rightarrow\cB$
given by the composition $h_{(x,y)}:(F\circ
G^{'})(x,y)=F(G_{1}^{'}(x)+G_{2}^{'}(y))\backsimeq
F(G_{1}^{'}(x))+F(G_{2}^{'}(y))\xrightarrow[]{(h_{1})_{x}+(h_{2})_{y}}G_{1}(x)+G_{2}(y)=G(x,0)+G(0,y)\backsimeq
G((x,0)+(0,y))=G(x,y)$, for any $(x,y)\in obj(\cP\times\cQ)$.

Then $\cP\times\cQ$ is a projective object in (2-SGp).
\end{proof}

\begin{Lem}
Let $(F,\varphi,G):\cA\rightarrow\cB\rightarrow\cC$ be an extension
of symmetric 2-groups in (2-SGp)(\cite{1,11}),
$(\cP_{\cdot},L_{\cdot},\alpha_{\cdot})$
$(\cQ_{\cdot},N_{\cdot},\beta_{\cdot})$ be projective resolutions of
$\cA$ and $\cC$, respectively. Then there is a projective resolution
$(\cK_{\cdot},M_{\cdot},\varphi_{\cdot})$ of $\cB$, such that
$\cP_{\cdot}\rightarrow \cK_{\cdot}\rightarrow\cQ_{\cdot}$ forms an
extension of complexes in (2-SGp).
\end{Lem}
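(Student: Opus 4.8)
The plan is to categorify the classical Horseshoe Lemma, using the product $2$-groups of Lemma 2 in place of direct sums and carrying along all the coherence $2$-morphisms. Write $L_0:\cP_0\to\cA$, $N_0:\cQ_0\to\cC$ for the augmentations and $L_n,N_n$ ($n\geq 1$) for the differentials of the two given resolutions, with structure $2$-morphisms $\alpha_\cdot,\beta_\cdot$. Since $(F,\varphi,G)$ is an extension, $G:\cB\to\cC$ is essentially surjective, so the projectivity of $\cQ_0$ yields a $1$-morphism $s_0:\cQ_0\to\cB$ and a $2$-morphism $G\circ s_0\Rightarrow N_0$.

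First I would set $\cK_n\triangleq\cP_n\times\cQ_n$ for $n\geq 0$; by Lemma 2 each $\cK_n$ is projective in (2-SGp). Let $i_1^n,i_2^n,p_1^n,p_2^n$ denote the canonical morphisms. The two arrows of the desired extension of complexes will be the families $(i_1^n)_n$ and $(p_2^n)_n$, and in each degree $\cP_n\xrightarrow{i_1^n}\cK_n\xrightarrow{p_2^n}\cQ_n$ is already relative $2$-exact in $\cK_n$ (it is split, $p_2^n\circ i_1^n=0$, $i_1^n$ is faithful and $p_2^n$ essentially surjective). Define the augmentation $M_0:\cK_0\to\cB$ by $M_0\triangleq(F\circ L_0)\circ p_1^0+s_0\circ p_2^0$, so that $M_0(x,y)\backsimeq F(L_0(x))+s_0(y)$. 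Then $M_0\circ i_1^0\backsimeq F\circ L_0$ and, using the $2$-morphism $\varphi:G\circ F\Rightarrow 0$ of the extension together with $G\circ s_0\Rightarrow N_0$, also $G\circ M_0\backsimeq N_0\circ p_2^0$; these provide the degree-$0$ compatibility $2$-cells of $(i_1^n)_n$ and $(p_2^n)_n$ over $F$ and $G$. A diagram chase with the relative $2$-exactness of $(F,\varphi,G)$ in $\cB$ then shows that $M_0$ is essentially surjective and that $\cK_0\xrightarrow{M_0}\cB\to 0$ is relative $2$-exact in $\cB$: given $b\in obj(\cB)$, lift $G(b)$ through $N_0$ to some $y$, use $2$-exactness in $\cB$ to write $b+s_0(y)^{*}\backsimeq F(a)$, lift $a$ through $L_0$ to some $x$, and note $M_0(x,y)\backsimeq F(L_0(x))+s_0(y)\backsimeq b$, with the evident behaviour on morphisms.

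Next I would iterate the construction of Proposition 2. The heart of the proof is that forming relative kernels preserves extensions: from the triple of augmentations and the degree-$0$ compatibility cells one obtains an induced sequence $Ker(L_0)\to Ker(M_0)\to Ker(N_0)$ which is again an extension in (2-SGp). Essential surjectivity of $Ker(M_0)\to Ker(N_0)$ is proved by the same kind of chase as for $M_0$ (given $(n,\nu)$ in $Ker(N_0)$, the object $s_0(n)$ becomes trivial after applying $G$, so relative $2$-exactness in $\cB$ lets one correct the $\cP_0$-coordinate to build a lift), faithfulness of $Ker(L_0)\to Ker(M_0)$ is formal, and relative $2$-exactness in the middle follows from that of $(F,\varphi,G)$ together with the split exactness of $\cP_0\to\cK_0\to\cQ_0$; this is a categorified snake lemma and may alternatively be extracted from the long $2$-exact sequence of \cite{11}. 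Granting this, the given resolutions restrict to projective resolutions of $Ker(L_0)$ (namely $\cP_{\geq 1}$) and of $Ker(N_0)$ (namely $\cQ_{\geq 1}$), so one repeats the previous two steps with $(\cA,\cB,\cC)$ replaced by $(Ker(L_0),Ker(M_0),Ker(N_0))$: the essentially surjective morphisms $\cP_1\to Ker(L_0)\hookrightarrow Ker(M_0)$ and, after lifting through $Ker(M_0)\to Ker(N_0)$ (using projectivity of $\cQ_1$), $\cQ_1\to Ker(M_0)$ assemble to an essentially surjective $\cK_1=\cP_1\times\cQ_1\to Ker(M_0)$; composing with $e_{M_0}$ gives $M_1:\cK_1\to\cK_0$ together with $\varphi_1:M_0\circ M_1\Rightarrow 0$ compatible with $can$. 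Continuing, one produces the whole $(\cK_\cdot,M_\cdot,\varphi_\cdot)$; each $M_n$ is, by construction, an essentially surjective morphism onto $Ker(M_{n-1})$ followed by the faithful $e_{M_{n-1}}$, so — exactly as in Proposition 2, checking $\overline{\varphi_{n+1}}$-fullness — the complex is relative $2$-exact in every $\cK_n$, hence a projective resolution of $\cB$; and in each degree $\cP_n\to\cK_n\to\cQ_n$ is the split extension above, so with the compatibility $2$-cells collected along the way $\cP_\cdot\to\cK_\cdot\to\cQ_\cdot$ is an extension of complexes.

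The main obstacle is precisely this categorified snake/kernel step, compounded by the volume of coherence data: every equality in the classical chase becomes a prescribed $2$-morphism, and one must verify that all of them — the cells defining $M_n$, the compatibility cells of $(i_1^n)_n$ and $(p_2^n)_n$, and the $2$-cells $\varphi_n$ — fit together into a genuine $2$-chain complex and a genuine extension of complexes. Checking essential surjectivity and relative $2$-exactness of $Ker(M_n)\to Ker(N_n)$ at each stage, with all $2$-morphisms coherent, is where the real work lies; the remaining verifications are routine but $2$-morphism-heavy, of the same flavour as the bookkeeping already carried out in Proposition 1 and Proposition 2.
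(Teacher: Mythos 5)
Your proposal follows essentially the same route as the paper's own proof: take $\cK_n=\cP_n\times\cQ_n$, define the augmentation $M_0(x,y)\backsimeq F(L_0(x))+\overline{N_0}(y)$ via a projective lift of $N_0$ through $G$, prove essential surjectivity by the same chase through $KerG$, and iterate through relative kernels using essentially surjective maps $\cP_{n+1}\times\cQ_{n+1}\rightarrow Ker(M_n,\varphi_n)$ composed with $e_{(M_n,\varphi_n)}$. The only difference is presentational: you isolate the ``kernels of a morphism of extensions form an extension'' step explicitly, which the paper leaves implicit in its claim that $M_{n+1}'$ is essentially surjective ``from the proof of step 1.''
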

\begin{proof}
We give the construction of projective resolution
$(\cK_{\cdot},M_{\cdot},\varphi_{\cdot})$ of $\cB$ in the following
steps.

Step 1. Since $\cQ_0$ is a projective object, together with
essentially surjective $G:\cB\rightarrow\cC$ and 1-morphism
$N_0:\cQ_{0}\rightarrow \cC$, there exist 1-morphism
$\overline{N_0}:\cQ_0\rightarrow \cB$ and 2-morphism $h_0:G\circ
\overline{N_0}\Rightarrow N_0$. Then we can define a 1-morphism
\begin{align*}
&\hspace{2cm}M_0:\cP_{0}\times\cQ_{0}\rightarrow \cB\\
&\hspace{3cm}(x_0,y_0)\mapsto M_{0}(x_0,y_0)\triangleq
F(L_0(x_0))+\overline{N_{0}}(y_0),\\
&\hspace{3cm}
(f_0,g_0)
\mapsto FL_{0}(f_0)+\overline{N_0}(g_0).
\end{align*}
Moreover, $M_0$ is essentially surjective. In fact, for any $B\in
obj(\cB)$, we have $G(B)\in obj(\cC)$. Since $N_0:\cQ_{0}\rightarrow
\cC$ is essentially surjective, there are $y_{0}\in obj(\cQ_0)$ and
isomorphism $N_{0}(y_0)\rightarrow G(B)$, together with 2-morphism
$h_0:G\circ \overline{N_0}\Rightarrow N_0$. We get a composition
isomorphism
$G(\overline{N_0}(y_0))\xrightarrow[]{(h_{0})_{y_0}}N_{0}(y_0)\rightarrow
G(B)$. Moreover, we get an isomorphism
$c:G(B+\overline{N_0}(y_0)^{*})\rightarrow 0$ in $\cC$. Then we
obtain an object $(B+\overline{N_0}(y_0)^{*},c)$ of $KerG$. Since
$(F,\varphi,G):\cA\rightarrow\cB\rightarrow\cC$ is an extension, by
the definition of extension, there is an equivalence
$F_0:\cA\rightarrow KerG$, which is essentially surjective, so there
are $A\in obj(\cA)$ and isomorphism $F_{0}(A)\rightarrow
(B+\overline{N_0}(y_0)^{*},c)$. For $A\in obj(\cA)$ and essentially
surjective morphism $L_0:\cP_{0}\rightarrow\cA$, there are $x_0\in
obj(\cP_0)$ and isomorphism $L_{0}(x_0)\rightarrow A$. Then we get a
composition isomorphism
$$
F(L_{0}(x_0))\rightarrow F(A)\rightarrow e_{G}(F_{0}(A))\rightarrow
e_{G}((B+\overline{N_0}(y_0)^{*},c))=B+\overline{N_0}(y_0)^{*}.
$$
There is an isomorphism
$$
F(L_{0}(x_0))+\overline{N_0}(y_0)\rightarrow B.
$$
Then, for any $B\in obj(\cB)$, there are $(x_0,y_0)\in
obj(\cP_{0}\times\cQ_{0})$ and isomorphism
$M_{0}(x_0,y_0)=F(L_{0}(x_0))+\overline{N_0}(y_0)\rightarrow B$.

Also,
\begin{center}
\scalebox{0.9}[0.85]{\includegraphics{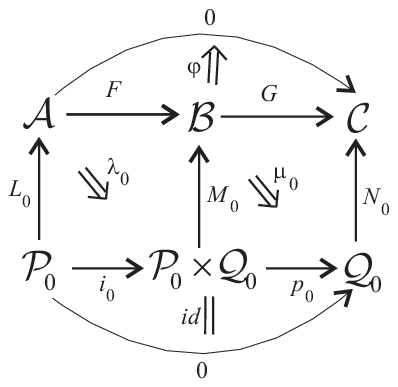}}
\end{center}
is the morphism of extensions in (2-SGp), where $\lambda_0:F\circ
L_0\Rightarrow M_{0}\circ i_{0}$ is given by
$(\lambda_{0})_{x_0}:F(L_{0}(x_{0})\backsimeq F(L_{0}(x_0))+0
\backsimeq
F(L_{0}(x_0))+\overline{N_{0}}(0)=M_{0}(x_0,0)=M_{0}(i_{0}(x_0))$,
for all $x_0\in obj(\cP_0)$. $\mu_{0}:G\circ M_0\Rightarrow
N_{0}\circ p_0$ is given by $(\mu_{0})_{(x_0,y_0)}:(G\circ
M_0)(x_0,y_0)=G(FL_{0}(x_0)+\overline{N_0}(y_0))\backsimeq
G(FL_{0}(x_0))+G(\overline{N_0}(y_0))\xrightarrow[]{\varphi_{L_{0}(x_0)}+(h_0)_{y_0}}N_0(y_0)$.

Step 2. From the definition of relative 2-exactness, there are
essentially surjective 1-morphisms $L_{1}^{'}:\cP_{1}\rightarrow
KerL_{0}$, $N_{1}^{'}:\cQ_{1}\rightarrow KerN_{0}$ as in the
following diagram
\begin{center}
\scalebox{0.9}[0.85]{\includegraphics{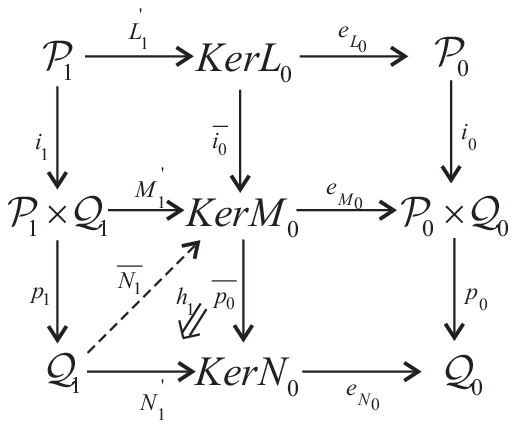}}
\end{center}
where $M_{1}^{'}:\cP_{1}\times\cQ_{1}\rightarrow KerM_{0}$ is given
by $M_{1}^{'}(x_1,y_1)\triangleq (\overline{i_0}\circ
L_{1}^{'})(x_1)+\overline{N_1}(y_1)$, for any $(x_1,y_1)\in
obj(\cP_{1}\times\cQ_{1})$, which is essentially surjective from the
proof of step 1.

Then we get a composition 1-morphism $M_{1}=e_{M_0}\circ
M_{1}^{'}:\cP_{1}\times\cQ_{1}\rightarrow \cP_{0}\times\cQ_{0}$, and
a composition 2-morphism $\varphi_{1}:M_{0}\circ M_{1}\Rightarrow
0\circ M_{1}^{'}\Rightarrow 0$, such that
\begin{center}
\scalebox{0.9}[0.85]{\includegraphics{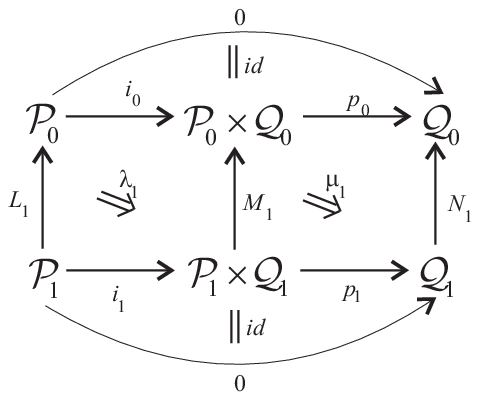}}
\end{center}
is a morphism of extensions in (2-SGp), where $\lambda_{1}$ and
$\mu_{1}$ are given in the natural way as in step 1.

Step 3. From the definition of relative 2-exactness, there are
essentially surjective 1-morphisms $L_{2}^{'}:\cP_{2}\rightarrow
Ker(L_{1},\alpha_{1})$, $N_{2}^{'}:\cQ_{2}\rightarrow
Ker(N_{1},\beta_{1})$ as in the following diagram
\begin{center}
\scalebox{0.9}[0.85]{\includegraphics{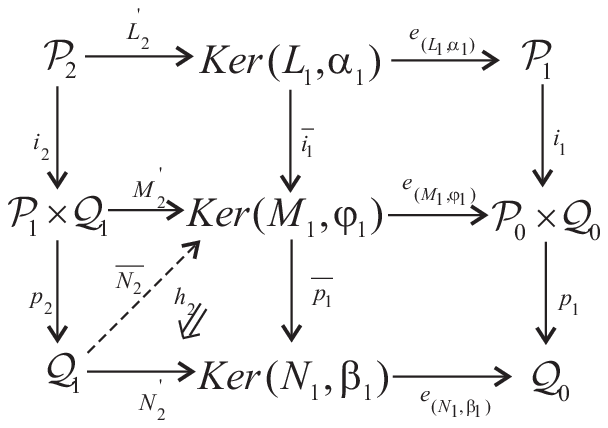}}
\end{center}
where $M_{2}^{'}:\cP_{2}\times\cQ_{2}\rightarrow
Ker(M_{1},\varphi_{1})$ is given by $M_{2}^{'}(x_2,y_2)\triangleq
(\overline{i_1}\circ L_{2}^{'})(x_2)+\overline{N_2}(y_2)$, for any
$(x_2,y_2)\in obj(\cP_{2}\times\cQ_{2})$, which is essentially
surjective from the proof of step 1.

Then we get a composition 1-morphism
$M_{2}=e_{(M_{1},\varphi_{1})}\circ
M_{2}^{'}:\cP_{2}\times\cQ_{2}\rightarrow \cP_{1}\times\cQ_{1}$, and
a composition 2-morphism $\varphi_{2}:M_{1}\circ M_{2}\Rightarrow
0\circ M_{2}^{'}\Rightarrow 0$, such that
\begin{center}
\scalebox{0.9}[0.85]{\includegraphics{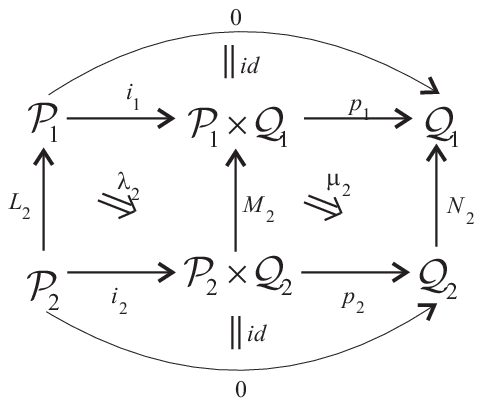}}
\end{center}
is a morphism of extensions in (2-SGp), where $\lambda_{2}$ and
$\mu_{2}$ are given in the natural way as in step 1.

Using the same method, we get a complex
$(\cP\times\cQ_{\cdot},M_{\cdot},\varphi_{\cdot})$ of product
symmetric 2-groups. Using the methods in Proposition 2, this complex
is relative 2-exact in each point, and
$(i_{\cdot},id,p_{\cdot}):\cP_{\cdot}\rightarrow
\cP_{\cdot}\times\cQ_{\cdot}\rightarrow\cQ_{\cdot}$ forms an
extension of complexes in (2-SGp).

Set $\cK_{n}=\cP_{n}\times\cQ_{n}$, for $n\geq 0$, which are
projective objects in (2-SGp) by Lemma 3.This finishes the proof.

\end{proof}

By the universal property of (bi)product of symmetric 2-groups and
the property of additive 2-functor(\cite{2}). We get
\begin{Lem}
Let $T$ be an additive 2-functor in 2-category (2-SGp), and $\cA,\
\cB$ be objects in (2-SGp). Then there is an equivalence between
$T(\cA\times\cB)$ and $T(\cA)\times T(\cB)$ in (2-SGp).
\end{Lem}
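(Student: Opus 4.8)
The plan is to use that $\cA\times\cB$ is not merely a product but a \emph{biproduct} in (2-SGp). Alongside the projections $p_1:\cA\times\cB\to\cA$ and $p_2:\cA\times\cB\to\cB$ there are the injections $i_1:\cA\to\cA\times\cB$ and $i_2:\cB\to\cA\times\cB$ (as already used in Lemma 3), together with coherence $2$-morphisms $p_k\circ i_k\Rightarrow 1$, $p_k\circ i_l\Rightarrow 0$ for $k\neq l$, and $i_1\circ p_1+i_2\circ p_2\Rightarrow 1_{\cA\times\cB}$; this last one uses that the hom-categories of (2-SGp) are themselves symmetric $2$-groups, so that parallel $1$-morphisms can be added. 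This is the $2$-categorical counterpart of the coincidence of finite products and coproducts in an additive category, and it holds because (2-SGp) is an abelian $2$-category \cite{2}. First I would write these data out explicitly.

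Next I would apply $T$. By definition an additive $2$-functor \cite{2} sends the additive structure of each hom-category to that of the image hom-category up to coherent $2$-morphism, so that $T(f+g)\backsimeq T(f)+T(g)$ and $T(0)\backsimeq 0$ naturally in $f,g$. Feeding the biproduct identities of the first paragraph through $T$ then yields $T(p_k)\circ T(i_k)\backsimeq 1$, $T(p_k)\circ T(i_l)\backsimeq 0$ for $k\neq l$, and $T(i_1)\circ T(p_1)+T(i_2)\circ T(p_2)\backsimeq T(1_{\cA\times\cB})\backsimeq 1_{T(\cA\times\cB)}$. Hence $\bigl(T(\cA\times\cB),T(p_1),T(p_2),T(i_1),T(i_2)\bigr)$ is a biproduct diagram for $T(\cA)$ and $T(\cB)$ in (2-SGp).

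Finally I would compare this with the chosen product $T(\cA)\times T(\cB)$. Its universal property yields a canonical $1$-morphism $\Phi\triangleq\langle T(p_1),T(p_2)\rangle:T(\cA\times\cB)\to T(\cA)\times T(\cB)$, and, writing $\pi_1,\pi_2$ for the injections of $T(\cA)\times T(\cB)$, one sets $\Psi\triangleq T(i_1)\circ\pi_1+T(i_2)\circ\pi_2$ in the reverse direction. A short diagram chase with the biproduct relations of the second paragraph (for $\Phi\circ\Psi$) and with the relations of $\cA\times\cB$ transported along $T$ (for $\Psi\circ\Phi$) produces $2$-morphisms $\Phi\circ\Psi\Rightarrow 1$ and $\Psi\circ\Phi\Rightarrow 1$, so that $\Phi$ is the required equivalence.

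The step I expect to be the real work is the coherence bookkeeping: one must verify that the $2$-morphisms produced for $\Phi\circ\Psi\Rightarrow 1$ and $\Psi\circ\Phi\Rightarrow 1$ are the components of genuine natural transformations in (2-SGp), compatible with the symmetric monoidal constraints, and not merely pointwise isomorphisms. This is the same kind of commutative-diagram chase that appears in Lemma 3 and Proposition 1; once the additivity of $T$ has been used, everything else is formal.
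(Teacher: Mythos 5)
Your proposal is correct and follows exactly the route the paper indicates: the paper offers no detailed argument for this lemma, justifying it in one line ``by the universal property of the (bi)product of symmetric 2-groups and the property of additive 2-functor,'' which is precisely the biproduct-preservation argument you spell out. Your version simply supplies the explicit comparison morphisms $\Phi$ and $\Psi$ and the coherence bookkeeping that the paper leaves implicit.
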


Proof of Theorem 2. For symmetric 2-groups $\cA$ and $\cC$, choose
projective resolutions $\cP_{\cdot}\rightarrow\cA$ and
$\cQ_{\cdot}\rightarrow\cC$. By Lemma 2 and Lemma 3, there is a
projective resolution $\cP_{\cdot}\times\cQ_{\cdot}\rightarrow \cB$
fitting into an extension
$\cP_{\cdot}\xrightarrow[]{i_{\cdot}}\cP_{\cdot}\times\cQ_{\cdot}\xrightarrow[]{p_{\cdot}}\cQ_{\cdot}$
of projective complexes in (2-SGp)(\cite{1}). By Lemma 4, we obtain
a complexes of extension
$$T(\cP_{\cdot})\xrightarrow[]{T(i_{\cdot})}T(\cP_{\cdot}\times\cQ_{\cdot})\xrightarrow[]{T(p_{\cdot})}T(\cQ_{\cdot}).$$

Similar as Theorem 4.2 in \cite{11}, the long sequence
\begin{center}
\scalebox{0.9}[0.85]{\includegraphics{p28.eps}}
\end{center}
is 2-exact in each point.

\section*{Acknowledgements.}
We would like to give our special thanks to Prof. Zhang-Ju LIU,
Prof. Yun-He SHENG for very helpful comments. We also thank Prof. Ke
WU and Prof. Shi-Kun WANG for useful discussions.

\bibliographystyle{model1b-num-names}

\begin{thebibliography}{7}
\bibitem{11} A.del R\'{\i}o, J. Mart\'{\i}nez-Moreno, and E. M. Vitale,
Chain complexes of symmetric categorical groups, J. Pure Appl.
Algebra, 196 (2005).

\bibitem{1} D. Bourn, E.M. Vitale, Extensions of symmetricc at-groups, Homol. Homotopy Appl. 4 (2002)103¨C162

\bibitem{19} H.-J. Baues and M. Jibladze. Secondary derived functors and the
Admas spectral sequence, Topology. 45(2006) 295-324.

\bibitem{21} L. Breen, Notes on 1- and 2-gerbes, Towards Higher Categories. The IMA Volumes in Mathematics and its Applications, 2010, Volume 152, 193-235, Springer, Berlin (2010) 193¨C235..

\bibitem{22} J.W.Barrett, M.Mackaay, Categorical representations of categorical groups

\bibitem{23} J. Baez and J. Dolan, Higher-dimensional algebra and topological quantum field theory, Jour. Math. Phys. 36 (1995) 6073-6105.

\bibitem{31} J. Baez and A. S. Crans, Higher-dimensional algebra VI: Lie 2-algebras Theory and Applications of Categories 12 (2004) 492-528.

\bibitem{32} J. Baez, Higher Yang-Mills Theory, arxiv:as hep-th/9808101.

\bibitem{2} M. Dupont, Abelian categories in dimension 2, PhD.Thesis, arxiv:hep-th/0809.1760v1.

\bibitem{3} I.P.Freyd. Abelian categories[M], New York: Harper$\&$Row,
1964.

\bibitem{15} D. S. Freed, M. J. Hopkins, J. Lurie, C. Teleman, Topological quantum field theories from compact
Lie groups. arxiv:hep-th/0905.0731v2.

\bibitem{4} F.Huang, S.H,Chen, W.Chen, Z.J.Zheng, 2-Modules and the Representation of
2-Rings. arxiv:hep-th/1005.2831 18 May 2010

\bibitem{14} F.Huang, S.H,Chen, W.Chen, Z.J.Zheng, Higher Dimensional Homology Algebra II:Projectivity.
arXiv:1006.4677.

\bibitem{16} S. Gelfand and Yu. Manin, Methods of Homological Algebra,
Springer, 1996.

\bibitem{17}R. Hartshorne, Residues and duality, Lecture Notes in Mathematics
20, Springer-Verlag, 1966.

\bibitem{5} M. Jibladze and T. Pirashvili, Third Mac Lane cohomology via
categorical rings [J]. J. Homotopy Relat. Struct., 2 (2007)
187¨C216.

\bibitem{8} J. Luire, Derived Algebraic Geometry I: Stable $\infty$-Categories. arxiv:hep-th/0608228v5.

\bibitem{9} J. Lurie. Higher topos theory, Annals of Mathematics
Studies, Princeton University Press

\bibitem{25} J. Lurie, On the classification of topological field theories.
arXiv:hep-th/0905.0465v1.


\bibitem{26} A. Joyal, Disks, duality and ¦È-categories, preprint,
1999.

\bibitem{12} J. J. Rotman, Advanced Modern Algebra, Higer
Education Press,2004..

\bibitem{18} M. Kontsevich, Homological Algebra of Mirror Symmetry,
International Congress of Mathematicians, Z¡§urich 1994.
Birkh¡§auser, 1995.

\bibitem{7} M. Kashiwara, P. Schapira, Categories and Sheaves,Grundlehren der
mathematischen Wissenschaften vol.332, Springer-Verlag.

\bibitem{6} S. Kasangian, E.M. Vitale, Factorization systems for symmetric cat-groups, Theory Appl. Categ. 7 (2000)
47¨C70.

\bibitem{20} T. Pirashvili, Projective and injective objects in symmetric categorical groups. arxiv:hep-th/1007.0121v1.

\bibitem{24} T. Pirashvili, On abelian 2-cateories and derived
2-functors.  arxiv:hep-th/1007.0121v1.

\bibitem{28} Y. Sheng and C. Zhu, Integration of semidirect product Lie 2-algebras, arXiv:1003.1348.

\bibitem{29} Y. Sheng, On deformations of Lie algebroids. arXiv:1004.2959.

\bibitem{10} C. Weibel, An intruction to homological algebra, China
Machine Press,2004.


\bibitem{30} X. Zhu, The 2-group of linear auto-equivalences of an abelian
category and its Lie 2-algebra, arXiv:0910.5699.












\end{thebibliography}

\noindent Fang Huang, Shao-Han Chen, Wei Chen\\
Department of Mathematics\\
 South China University of
 Technology\\
 Guangzhou 510641, P. R. China

\noindent Zhu-Jun Zheng\\
Department of Mathematics\\
 South China University of
Technology\\
 Guangzhou 510641, P. R. China \\
 and\\
Institute of Mathematics\\
Henan University\\  Kaifeng 475001, P. R.
China\\
E-mail: zhengzj@scut.edu.cn

\end{document}